\documentclass[12pt]{article}
\usepackage[total={6in, 9in}]{geometry}
\usepackage{mathrsfs}
\usepackage{t1enc}
\usepackage[latin1]{inputenc}
\usepackage[english]{babel}
\usepackage{amsmath,amsthm,amssymb}
\usepackage{amsfonts}
\usepackage{latexsym}
\usepackage{enumerate}
\usepackage{dsfont}
\newcommand{\ve}{\varepsilon }

\usepackage{graphicx}
\usepackage[natural]{xcolor}
\usepackage{tikz}
\usepackage{tikz-3dplot}
\usetikzlibrary{shapes}
\usetikzlibrary{arrows,decorations.pathmorphing,backgrounds,positioning,fit,petri,automata}
\usetikzlibrary {positioning}

\usepackage[
pdfauthor={},
pdftitle={},
pdfstartview=XYZ,
bookmarks=true,
colorlinks=true,
linkcolor=blue,
urlcolor=blue,
citecolor=blue,
bookmarks=true,
linktocpage=true,
hyperindex=true
]{hyperref}

\DeclareGraphicsRule{.wmf}{bmp}{}{}

%\textwidth 16.5cm \textheight 23cm \topmargin -1 cm \hoffset -1.7 cm
%\voffset 0cm
%\newtheorem{theorem}{Theorem}
%\newtheorem{c\laim}{C\laim}
%\newtheorem{lemma}{Lemma}

%\newtheorem{proposition}[theorem]{Proposition}
%\newtheorem{conjecture}{Conjecture}
%\newtheorem{example}[theorem]{Example}
%\newtheorem{definition}[theorem]{Definition}
%\newtheorem{corol\lary}[theorem]{Corol\lary}
% THEOREM Environments ---------------------------------------------------
\theoremstyle{plain}
\newtheorem{thm}{Theorem}[section]
%\newtheorem{lem}{Lemma}
%\newtheorem{example}[equation]{Example}
%\newtheorem{coro}{Corol\lary}
%\newtheorem{defi}{Definition}
%\newcommand{\qed}{\hfill \mbox{$\Box$}}
%\newcommand{\proof}{\noindent {\bf Proof}\ \ }
%\newcommand{\ff}{\mbox{\rm I\hspace{-1.7pt}F}}
%\newcommand{\bA}{{\bf A}}
%\theoremstyle{nonumberbreak}
%\theorembodyfont{}
%\newtheorem{Proof}{Proof}

%\theoremstyle{break}
%\theorembodyfont{\it}

\newtheorem{conj}[thm]{Conjecture}
\newtheorem{claim}[thm]{Claim}

\newtheorem{LEM}[thm]{\textbf{Lemma}}
%%[section]
%
\theoremstyle{plain}

\theoremstyle{plain}

\theoremstyle{plain}

\linespread{1.15}
\setlength{\parskip}{.15in}

\begin{document}
\title{Vertex-distinguishing and sum-distinguishing edge coloring of regular graphs}
\author{Yuping Gao\footnote{Lanzhou University, School of Mathematics and Statistics, Lanzhou 730000, China.}
	\qquad
	Songling Shan\footnote{Auburn University, Department of Mathematics and Statistics, Auburn, AL 36849, USA.}\qquad Guanghui Wang\footnote{Shandong University, School of Mathematics, Jinan 250100, China.}
}

\date{\today}
\maketitle
\begin{abstract} Given an integer $k\ge1$,
an edge-$k$-coloring of a graph $G$ is an assignment of $k$ colors $1,\ldots,k$ to the edges of $G$ such that no two adjacent edges receive the same color. A vertex-distinguishing (resp. sum-distinguishing) edge-$k$-coloring of $G$ is an edge-$k$-coloring  such that for any two distinct vertices $u$ and $v$, the set (resp. sum) of  colors taken  from all the edges incident with $u$ is different from that taken from all the edges incident with $v$. The vertex-distinguishing chromatic index (resp. sum-distinguishing chromatic index),
denoted $\chi'_{vd}(G)$ (resp.  $\chi'_{sd}(G)$), is  the smallest value $k$ such that $G$ has  a vertex-distinguishing-edge-$k$-coloring
(resp. sum-distinguishing-edge-$k$-coloring).
Let $G$ be a   $d$-regular graph  on $n$ vertices, where $n$ is even and sufficiently large. We show that
$\chi'_{vd}(G) =d+2$ if  $d$ is arbitrarily  close to $n/2$ from above, and $\chi'_{sd}(G) =d+2$ if $d\ge \frac{2n}{3}$.
Our  first  result  strengthens a result of Balister et al. in 2004 for such class of regular graphs, and
our second result constitutes a significant advancement in the field of sum-distinguishing edge coloring.
 To achieve these results, we introduce novel edge coloring results
 which  may be of independent interest.

\medskip

\noindent {\textbf{Keywords}: Edge coloring; vertex-distinguishing edge coloring; sum-distinguishing edge coloring}
\end{abstract}

\section{Introduction}

Graphs in this paper are simple and finite.
Let $G$ be a graph.  The vertex set and edge set of $G$ are denoted by $V(G)$ and $E(G)$, respectively.
Let  $v\in V(G)$ and $S\subseteq V(G)$.  Then $N_G(v)$ is the set of neighbors of $v$ in $G$,
 $d_G(v):=|N_G(v)|$ is the degree of $v$ in $G$, and $d_S(v)=|N_G(v)\cap S|$. The subgraph of $G$ induced on  $S $  is   denoted by $G[S]$.
 For any two disjoint subsets $A,B\subseteq V(G)$, we use $E_G(A,B)$ to denote the set of edges in $G$ with one endvertex in $A$ and the other endvertex in $B$.  For $F\subseteq E(G)$, $G[F]$ is the subgraph of $G$ induced on $F$. For $e\in E(G)$, $G-e$
 is obtained from $G$ by deleting $e$.
 For two integers $p$ and $q$, let $[p,q]=\{i\in \mathbb{Z}\mid p\leq i\leq q\}$.

Given an integer $k\ge 0$,
 an \emph{edge-$k$-coloring} of a graph $G$ is an assignment of $k$ colors from $[1,k]$ to the edges of $G$ such that no two adjacent edges receive the same color.  A \emph{color class} of the coloring is the  set of edges assigned with the same color.
 The \emph{chromatic index} of  $G$, denoted   $\chi'(G)$, is the minimum integer $k$ such that $G$ admits an edge-$k$-coloring. 	In 1960s, Gupta~\cite{Gupta-67}  and  Vizing~\cite{Vizing-2-classes}  independently proved
	that for all  graphs $G$,  $\Delta(G) \le \chi'(G) \le \Delta(G)+1$.  Various edge coloring variants have been studied, often imposing additional constraints. For example, acyclic edge coloring and list edge coloring. In this paper, we focus on two specific variants: vertex-distinguishing edge coloring and sum-distinguishing edge coloring.

 Given an edge-$k$-coloring $\varphi$, we can deduce two distinct  vertex labelings of $G$ based on $\varphi$ as follows. For
 each $v\in V(G)$, let $s_\varphi(v)=\{\varphi(vu)| u\in N_G(v)\}$ and $\omega_{\varphi}(v)=\sum_{u\in N_G(v)}\varphi(vu)$. We call $\varphi$ a \emph{vertex-distinguishing edge-$k$-coloring (vd-edge-$k$-coloring)}  if $s_\varphi(u) \ne s_\varphi(v)$ for any distinct $u,v\in V(G)$, and we call
 $\varphi$ a
 \emph{sum-distinguishing edge-$k$-coloring (sd-edge-$k$-coloring)} if $\omega_\varphi(u) \ne \omega_\varphi(v)$ for any distinct $u,v\in V(G)$.  The \emph{vertex-distinguishing chromatic index} (resp. \emph{sum-distinguishing chromatic index}),
denoted $\chi'_{vd}(G)$ (resp.  $\chi'_{sd}(G)$), is  the smallest value $k$ such that $G$ has  a vd-edge-$k$-coloring
(resp. sd-edge-$k$-coloring).  It is clear that $\chi'(G) \le \chi'_{vd}(G) \le \chi'_{sd}(G)$.

Vertex-distinguishing edge coloring was introduced independently by Burris and Schelp in~\cite{BS1997}, and by Aigner et al. in~\cite{ATT1992}.
A graph $G$ has a vd-edge-$k$-coloring or sd-edge-$k$-coloring if and only if $G$ contains at most one isolated vertex and no isolated edges.  In~\cite{BS1997},  the vertex-distinguishing chromatic index was established  for complete graphs, complete bipartite graphs, paths and cycles. Burris and Schelp  also proposed the following conjecture.

\begin{conj}[Burris and Schelp~\cite{BS1997}]\label{conj1-3}  Let $G$ be a  graph  with at most one isolated vertex and with no isolated edge. Let $\pi(G)=\min\{k\in \mathbb{N} \mid \binom{k}{d}\geq n_{d}\}$ for all $\delta(G)\leq d\leq \Delta(G)$, where $n_{d}$ is the number of vertices with degree $d$ in $G$.  Then $\chi'_{vd}(G) \in \{\pi(G), \pi(G)+1\}$.
\end{conj}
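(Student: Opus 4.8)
The lower bound $\chi'_{vd}(G)\ge\pi(G)$ is the easy half and I would dispose of it first: in any edge-$k$-coloring $\varphi$ the $d_G(v)$ edges at a vertex $v$ are pairwise adjacent, hence receive pairwise distinct colors, so $s_\varphi(v)$ is a $d_G(v)$-subset of $[1,k]$; if $k<\pi(G)$ then $\binom{k}{d}<n_d$ for some $d\in[\delta(G),\Delta(G)]$, and pigeonhole forces $s_\varphi(u)=s_\varphi(v)$ for two degree-$d$ vertices. So the entire content is the upper bound $\chi'_{vd}(G)\le\pi(G)+1$, and I would split it into two stages. Stage (i): choose for each $v$ a target palette $P(v)\subseteq[1,\pi(G)+1]$ with $|P(v)|=d_G(v)$ so that $v\mapsto P(v)$ is injective. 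Stage (ii): realize the family $\{P(v)\}$ as a proper edge coloring, i.e.\ find pairwise edge-disjoint sets $F_1,\dots,F_{\pi(G)+1}$ partitioning $E(G)$ such that $F_i$ is a perfect matching of $G[M_i]$, where $M_i=\{v: i\in P(v)\}$; this reformulation is both necessary and sufficient and makes clear that the one spare color is the only slack available.

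For stage (ii) I would proceed by an alternating-path correction scheme. Start from an arbitrary proper edge coloring using $\Delta(G)+1\le\pi(G)+1$ colors (Vizing), then iteratively pick a vertex $v$ whose current palette $s_\varphi(v)$ differs from the target $P(v)$: there is a color $i\in s_\varphi(v)\setminus P(v)$ and a color $j\in P(v)\setminus s_\varphi(v)$, and swapping colors along the maximal $i/j$-alternating path (Kempe chain) starting at $v$ moves one wrong color toward where it belongs. The delicate point is to choose $i,j$ and the chain so that the swap never spoils an already-correct vertex; reserving the spare color $\pi(G)+1$ as a buffer and processing vertices in a carefully chosen order (or, in the probabilistic incarnation, showing that only a vanishing fraction of vertices are ``bad'' after a random palette assignment and cleaning them up with the Lov\'asz Local Lemma or an absorbing structure) is the mechanism I would rely on. In the dense regime --- the one relevant to this paper --- an Eulerian-orientation/Euler-tour argument in the spirit of Balister, Gy\H{o}ri, Lehel and Schelp gives much tighter control of the matchings $F_i$ and is the route I would actually take; in the sparse regime stage (i) must be handled by hand, since a leaf forces a size-one palette and there are only $\pi(G)+1$ of those.

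The real obstacle is the interaction between stages (i) and (ii): the injection $v\mapsto P(v)$ must be chosen not merely to be injective but so that \emph{every} $G[M_i]$ has a perfect matching and so that these $\pi(G)+1$ perfect matchings can be packed into $G$ simultaneously --- a global feasibility condition resembling a defect-Hall condition over an exponential-size bipartite structure, which must moreover be respected across vertices of many different degrees. This is exactly why the general conjecture remains open, so as a realistic plan I would first prove it under a Dirac-type hypothesis on $\delta(G)$ (large enough that each $G[M_i]$ is automatically dense, so perfect matchings come for free and packing is governed by classical results), and only afterward attempt to peel off a bounded ``correction layer'' of low-degree vertices to reach arbitrary minimum degree.
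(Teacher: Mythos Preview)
The statement you were asked to prove is Conjecture~\ref{conj1-3}, and the paper does \emph{not} prove it: it is stated as an open conjecture of Burris and Schelp, and the paper only cites partial confirmations (for $\Delta(G)=2$, and for $\Delta(G)\ge\sqrt{2|V(G)|}+4$ with $\delta(G)\ge 5$). There is therefore no ``paper's own proof'' to compare your proposal against.

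You evidently realise this --- you write that ``the general conjecture remains open'' and retreat to a Dirac-type hypothesis --- so your proposal is really a strategy sketch rather than a proof. A few remarks on its content. The lower bound $\chi'_{vd}(G)\ge\pi(G)$ is correct and standard, exactly as you wrote it. For the upper bound, your two-stage plan (choose injective target palettes, then realise them via Kempe chains or matchings) is a natural way to think about the problem, but the ``delicate point'' you flag is a genuine gap, not a technicality: there is no known mechanism guaranteeing that a Kempe swap at one vertex can always be performed without destroying the palette of some already-processed vertex, and reserving a single spare colour is not, by itself, enough to make this work. Your fallback to the dense case is sensible, and indeed the paper's own Theorem~\ref{thm}(i) is exactly a result of that type --- it determines $\chi'_{vd}(G)$ precisely for dense regular graphs of even order --- but its proof is not via your palette-assignment-and-correction scheme at all; it instead precolours a carefully designed $2$-factor using blocks of a $(9,12,4,3,1)$-BIBD and then extends the precolouring to the whole graph through a sequence of partition and equalisation lemmas. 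So even in the special case where your proposal would land, the route the paper takes is structurally different from the one you outline.
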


Conjecture~\ref{conj1-3} was confirmed for graphs $G$ with $\Delta(G)=2$~\cite{BBS2002}. In~\cite{BHLW1999}, the authors proved that $\chi'_{vd}(G)\leq |V(G)|+1$ for any  graph $G$ that has at most one isolated vertex but no isolated edge. Later, they proved that $\chi'_{vd}(G)\leq \Delta(G)+5$ for any graph $G$ with $\delta(G)>\frac{|V(G)|}{3}$~\cite{BHLW2001}. The  minimum degree condition above was improved to a minimum degree sum condition in~\cite{LL2010}. Conjecture~\ref{conj1-3} was also confirmed for graphs $G$ with $\Delta(G)\geq \sqrt{2|V(G)|}+4$ and $\delta(G)\geq 5$~\cite{BKLS2004}. In this paper we will give the exact value of vertex-distinguishing edge chromatic index for large regular graphs with even order and it strengthens the result in~\cite{BKLS2004} for such classes of graphs. Furthermore,
 we make some advancement in the field of sum-distinguishing edge coloring.

%\begin{thm}\label{thm:VDE} There exists $n_0\in \mathbb{N}$ for which the following statement holds. If $G$ is a $d$-regular graph on $n\geq n_0$ vertices with $d\geq \frac{(1+\varepsilon)n}{2}$ and $n$ is even, then $\chi'_{vd}(G)=d+2$.
%\end{thm}

\begin{thm}\label{thm}  The following statements hold.
\begin{enumerate} \item[{\rm (i)}] For any $0<\ve <1$, there exists $n_0\in \mathbb{N}$ such that
if $G$ is  a $d$-regular graph on $n\geq n_0$ vertices with $d\geq \frac{(1+\varepsilon)n}{2}$
and  $n$ even,    then $\chi'_{vd}(G)=d+2$.
\item[{\rm (ii)}] There exists $n_0\in \mathbb{N}$ such that
if  $G$ is  a $d$-regular graph on $n\geq n_0$ vertices with $d\geq \frac{2n}{3}$
and  $n$ even,    then $\chi'_{sd}(G)=d+2$.
\end{enumerate}
\end{thm}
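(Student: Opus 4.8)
The plan is to prove each equality by matching lower and upper bounds, and the lower bounds come first and cheaply. In any proper edge-$(d+1)$-coloring of the $d$-regular graph $G$, every vertex $v$ is incident with edges of $d$ distinct colors and hence \emph{misses} exactly one color $c_v\in[1,d+1]$; this single value determines both $s_\varphi(v)$ and $\omega_\varphi(v)=\binom{d+2}{2}-c_v$. So if $d\le n-2$ there are only $d+1<n$ possible signatures and two vertices must agree, ruling out vd- and sd-edge-$(d+1)$-colorings; and since a proper edge-$d$-coloring of a $d$-regular graph gives every vertex the palette $[1,d]$, nothing below $d+2$ works when $d\le n-2$. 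When $d=n-1$ we have $G=K_n$ with $n$ even, and a distinguishing edge-$n$-coloring would force $v\mapsto c_v$ to be a bijection onto $[1,n]$, making the color class of each color $i$ a matching covering exactly the $n-1$ vertices with $c_v\ne i$, which is impossible as $n-1$ is odd. Hence $\chi'_{vd}(G)\ge\chi'_{sd}(G)\ge d+2$ in both parts.

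For the upper bounds the first move is to fix a convenient base coloring. Since $n$ is even and large and $d\ge\frac{(1+\ve)n}{2}$ (resp.\ $d\ge\frac{2n}{3}$) comfortably exceeds the threshold $2\ceil{n/4}-1$ above which dense regular graphs of even order are $1$-factorable, $G$ decomposes into perfect matchings $F_1,\dots,F_d$; color $F_a$ with color $a$ (for $G=K_n$, $n$ even, this decomposition is classical). Now reserve two fresh colors $d+1,d+2$ and recolor a sparse edge set into them. The key reformulation: a legal such recoloring is the same as a choice, for each $a\in[1,d]$, of a sub-matching $M_a\subseteq F_a$ such that $L:=\bigcup_aM_a$ is a linear forest, together with a proper $2$-edge-coloring of $L$ by $\{d+1,d+2\}$ (which exists as $L$ has maximum degree at most $2$ and no cycle). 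After recoloring the edges of $L$ accordingly, each vertex $v$ misses precisely the pair $\bar s(v)$ that equals $\{d+1,d+2\}$ if $v\notin V(L)$; equals $\{a,c\}$ if $v$ is a leaf of $L$ with its $L$-edge in $M_a$ and $c$ the element of $\{d+1,d+2\}$ not coloring that edge; and equals $\{a,b\}$ if $v$ is internal in $L$ with its two $L$-edges in $M_a$ and $M_b$. Thus set-distinguishing is exactly injectivity of $v\mapsto\bar s(v)$, and sum-distinguishing is injectivity of $v\mapsto a_v+b_v$, since $\omega_\varphi(v)=\binom{d+3}{2}-a_v-b_v$.

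For part (i) I would build $L$ greedily inside $F_1\cup\dots\cup F_t$ with $t$ of order $\sqrt n$ (so $t\ll d$): process the vertices one by one, and for each vertex not yet assigned a pair, start or extend a path of $L$ through it along an edge of some not-yet-overused $F_a$, thereby giving that vertex and its $F_a$-partner a missing pair not used so far. The available pairs number $1+2t+\binom t2\gg n$, and the density of $G$ ensures that at each step a suitable extending index exists and the few forbidden vertices (in particular the ones that would close a cycle) can be avoided, so the greedy completes with $v\mapsto\bar s(v)$ injective. For part (ii) the same recoloring scheme is used, but now the paths of $L$ must be arranged so that the consecutive-index sums along them, together with the sums from the leaves and from the single type-$\{d+1,d+2\}$ vertex, realize $n$ distinct values inside $[3,2d+3]$; scheduling these sums while obeying the linear-forest and matching constraints is where the slack in $d\ge\frac{2n}{3}$, rather than merely $\frac{(1+\ve)n}{2}$, is spent.

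The main obstacle is the flexibility lemma behind the reformulation above — that (co-)prescribed missing-pair assignments of this form are realizable by a recoloring — which is the ``novel edge coloring result.'' The difficulty is structural, not numerical: recoloring an edge changes the palettes of \emph{both} its endpoints, so vertices cannot be corrected in isolation; the per-vertex targets must be threaded consistently through the matchings $F_a$ into honest paths of $L$, and in part (ii) the sums must be steered globally during this threading. Making the greedy robust — always able to extend without creating a cycle or a color conflict, and, in (ii), without burning a still-needed sum value — is precisely what the minimum-degree hypotheses buy. A minor additional check is that $d$ sits above the $1$-factorization threshold across the full stated range, including the near-complete cases $d\in\{n-2,n-1\}$.
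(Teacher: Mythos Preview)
Your route is genuinely different from the paper's, and the gap is exactly where you yourself flag ``the main obstacle.'' The paper never takes a $1$-factorization of $G$. For (i) it \emph{adds} to $G$ a $2$-regular graph $Q$ on $V(G)$ (a disjoint union of triangles, precolored vertex-distinguishingly via a $(9,12,4,3,1)$-BIBD) to form a $(d+2)$-regular multigraph $G^*$; for (ii) it \emph{finds} inside $G$ a spanning union of stars $Q$, precolored with prescribed odd integers. The technical core --- the advertised ``novel edge coloring results'' --- are two precolored-extension lemmas (a near-equitable extension preserving a bounded precolored set, and a multifan argument bounding the number of extra colors needed), combined with a vertex bipartition $A\cup B$ and an alternating-path step that promotes every color class to a perfect matching of $G^*$. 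Since $G^*$ is $d^*$-regular and gets a proper $d^*$-coloring, every vertex sees every color; deleting $E(Q)$ (for (i)) or swapping $\varphi_0$ on $E(Q)$ for an explicit $\varphi_0'$ using two reserved colors (for (ii)) then yields the distinguishing coloring. In particular, for (ii) the paper writes down the final recoloring of $Q$ by hand and checks arithmetically that the $n$ sums are pairwise distinct.

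Your linear-forest reformulation is correct, and the lower bound is fine (though your chain should read $\chi'_{sd}(G)\ge\chi'_{vd}(G)\ge d+2$, not the reverse). But you do not actually build $L$. For (i), once the factorization $F_1,\dots,F_d$ is fixed, the $F_a$-partner of each vertex is determined; your greedy must produce a linear forest missing at most one vertex with $v\mapsto\bar s(v)$ injective, and near the end of the process essentially every candidate partner already lies in $L$, while the index $a_i$ you arrive on may have exhausted most of its available pairs. ``The density of $G$ ensures a suitable extending index exists'' is not an argument --- the obstruction lives in the pair-scheduling on $[1,t]$, not in $G$. For (ii) you give no construction at all, only the assertion that the slack in $d\ge\frac{2n}{3}$ absorbs the sum constraints; this is precisely the step the paper works hardest on, and it does so by an entirely different mechanism. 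A secondary point: your plan imports the Csaba--K\"uhn--Lo--Osthus--Treglown $1$-factorization theorem as a black box, which is a much deeper input than anything the paper uses.
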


To the best of our knowledge, Theorem~\ref{thm}(ii) represents the first major breakthrough in the field of sum-distinguishing edge coloring, which has historically faced significant challenges.
The work in~\cite{MSJ2012} serves as a foundational reference in this area, where
the authors   calculated both the vertex-distinguishing chromatic index and the sum-distinguishing chromatic index for complete graphs.

\begin{thm}[Mah\'{e}o and Sacl\'{e}~\cite{MSJ2012}]\label{prop2} If $n\geq 3$, then
\[\chi'_{vd}(K_n)=\chi'_{sd}(K_n)=\left\{\begin{array}{cc}
                         n & \mbox{if}\ n\ \mbox{is\ odd};\\
                         n+1& \mbox{if}\ n\ \mbox{is\ even}.
                       \end{array}\right.
\]
\end{thm}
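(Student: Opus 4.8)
\medskip
\noindent\textbf{Proof proposal.}
I would split on the parity of $n$ and, in each case, match a lower bound against an explicit colouring. Throughout, use the elementary observation that in any edge-$k$-colouring $\varphi$ of $K_n$ every vertex has degree $n-1$, so $s_\varphi(v)$ is an $(n-1)$-subset of $[1,k]$, hence is determined by the set $M_\varphi(v):=[1,k]\setminus s_\varphi(v)$ of the $k-n+1$ colours \emph{missing} at $v$, and $\omega_\varphi(v)=\binom{k+1}{2}-\sum_{c\in M_\varphi(v)}c$. We also use the classical facts $\chi'(K_n)=n-1$ for $n$ even and $\chi'(K_n)=n$ for $n$ odd. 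For $n$ odd the latter already gives $\chi'_{vd}(K_n)\ge\chi'(K_n)=n$. For $n$ even we rule out $k\le n$: if $k=n-1$ then $s_\varphi(v)=[1,n-1]$ for every $v$, so $\varphi$ is not vertex-distinguishing once $n\ge 3$; if $k=n$ then every $M_\varphi(v)$ is a singleton, so a vertex-distinguishing $\varphi$ would force $v\mapsto M_\varphi(v)$ to be a bijection onto $[1,n]$, and then the colour class of each colour $c$ would be a matching covering every vertex except the unique $v$ with $M_\varphi(v)=\{c\}$ --- a perfect matching of a graph on $n-1$ (odd) vertices, impossible. Hence $\chi'_{vd}(K_n)\ge n+1$, and since $\chi'_{sd}\ge\chi'_{vd}$ also $\chi'_{sd}(K_n)\ge n+1$, for $n$ even.

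For the upper bound when $n$ is odd, take any proper $n$-edge-colouring $\varphi$ of $K_n$. Each vertex misses exactly one colour, and no colour is missing at zero vertices (that would make its class a perfect matching of $K_n$, impossible for $n$ odd), so counting (vertex, missing colour) incidences shows each colour is missing at exactly one vertex; thus $v\mapsto m(v)$ with $M_\varphi(v)=\{m(v)\}$ is a bijection onto $[1,n]$. Then both $s_\varphi(v)=[1,n]\setminus\{m(v)\}$ and $\omega_\varphi(v)=\binom{n+1}{2}-m(v)$ are pairwise distinct, so $\varphi$ is simultaneously vd and sd, and $\chi'_{vd}(K_n)=\chi'_{sd}(K_n)=n$ for $n$ odd.

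The substantive point is $\chi'_{sd}(K_n)\le n+1$ for $n$ even, which I would get by colouring $K_{n+1}$ and deleting a vertex. Since $n+1$ is odd, in \emph{any} proper $(n+1)$-edge-colouring $\varphi_0$ of $K_{n+1}$ each colour class is a perfect matching of $K_{n+1}$ minus one vertex, so the map $\mu$ sending a vertex to its unique missing colour is a bijection onto $[1,n+1]$. Delete a vertex $u$; the restriction $\varphi$ to $K_n$ still uses all $n+1$ colours, and each remaining vertex $v$ misses exactly the two distinct colours $\mu(v)$ and $\varphi_0(uv)$. Writing $S:=[1,n+1]\setminus\{\mu(u)\}$, both $v\mapsto\mu(v)$ and $v\mapsto\varphi_0(uv)$ are bijections onto $S$, so $\sigma:=\bigl(v\mapsto\varphi_0(uv)\bigr)\circ\bigl(v\mapsto\mu(v)\bigr)^{-1}$ is a fixed-point-free permutation of the $n$-element set $S$, and $\omega_\varphi(v)=\binom{n+2}{2}-\bigl(x+\sigma(x)\bigr)$ with $x=\mu(v)$. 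Hence $\varphi$ is sum-distinguishing exactly when $x\mapsto x+\sigma(x)$ is injective on $S$. Taking the cyclic colouring $\varphi_0(\{i,j\})=i+j \pmod{n+1}$ on $\mathbb{Z}_{n+1}$ and $u=0$, vertex $i$ misses the colour pair $\{2i,i\}$ (reduced mod $n+1$), whose sum is $\equiv 3i \pmod{n+1}$; these sums are pairwise distinct --- hence distinct as integers, hence the $\omega_\varphi(v)$ are distinct --- precisely when $3\nmid n+1$.

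The case $3\mid n+1$ is where I expect the real obstacle: for the cyclic colouring the vertices $\tfrac{n+1}{3}$ and $\tfrac{2(n+1)}{3}$ are \emph{forced} to miss the same colour pair, for every choice of deleted vertex and every relabelling of colours, so $\varphi$ is then not even vertex-distinguishing. One must instead produce a genuinely non-cyclic proper $(n+1)$-edge-colouring of $K_{n+1}$; equivalently, a proper $(n+1)$-edge-colouring of $K_n$ whose family of missing pairs $\{x,\sigma(x)\}$ realises some fixed-point-free $\sigma$ on an $n$-set with $x\mapsto x+\sigma(x)$ injective. Such a $\sigma$ exists by a short direct construction ($\sigma$ cannot be an involution, so one takes a single long cycle), and the only parity constraint on realising a prescribed family of missing pairs --- that each colour be missing at an even number of vertices --- holds automatically here, so a careful recolouring of a $1$-factorization of $K_{n+1}$ (or the deletion of two vertices from a $1$-factorization of $K_{n+2}$) should yield the colouring. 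Granting this, $\chi'_{sd}(K_n)\le n+1$; combined with the lower bound and $\chi'_{vd}\le\chi'_{sd}$ this gives $\chi'_{vd}(K_n)=\chi'_{sd}(K_n)=n+1$ for $n$ even, completing the proof.
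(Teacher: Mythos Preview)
The paper does not prove this statement: it is quoted from Mah\'eo and Sacl\'e~\cite{MSJ2012} and invoked only as a black box (the proof of Theorem~\ref{thm} cites it to dispose of the case $G=K_n$ before assuming $G\ne K_n$). There is therefore no in-paper argument to compare your proposal against.

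On the merits of your argument: the lower bounds and the odd-$n$ upper bound are clean and correct, and for even $n$ with $3\nmid(n+1)$ the cyclic colouring of $K_{n+1}$ on $\mathbb{Z}_{n+1}$ followed by deleting the vertex $0$ does yield an sd-$(n+1)$-edge-colouring of $K_n$. The gap is the case $3\mid(n+1)$, which you flag yourself. Two things are left unestablished there. First, the existence of the permutation $\sigma$: your hint ``one takes a single long cycle'' does not work as stated, since for $S=\{1,\dots,n\}$ and $\sigma=(1\;2\;\cdots\;n)$ one gets $x+\sigma(x)=2x+1$ for $x<n$ but $n+\sigma(n)=n+1$, which collides with $x=n/2$; a working $\sigma$ requires a genuinely different construction. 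Second, and more seriously, even granting a suitable $\sigma$, you have not shown that it is \emph{realised} by some proper $(n+1)$-edge-colouring of $K_n$. The assertion that ``the only parity constraint \dots\ holds automatically'' is not a proof: you are asking for a decomposition of $K_n$ into one perfect matching (for the colour $\mu(u)$) together with $n$ near-perfect matchings, each avoiding a prescribed pair of vertices, and evenness of the missing-counts is necessary but not by itself sufficient for such a decomposition to exist. As written, then, the proposal proves the theorem for $n$ odd and for $n$ even with $3\nmid(n+1)$, but leaves the case $3\mid(n+1)$ open; to close it you would need either an explicit construction or a reference to~\cite{MSJ2012}.
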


The remainder of this paper is organized as follows. We provide necessary   preliminaries in the next section, and then we prove Theorem~\ref{thm} in Section 3.

\section{Preliminaries}

Let $G$ be a graph.
A \emph{matching}  $M$ in  $G$ is a set of vertex-disjoint edges.  The set of vertices \emph{covered} or \emph{saturated} by $M$ is denoted by $V(M)$.
We  say  that $M$ is \emph{perfect}    if $V(M)=V(G)$. The following result can be easily proved by applying Hall's  Theorem, and one
can also find a short proof in Lemma 9 of~\cite{V2013}.

\begin{LEM}\label{lem:PM} Let $G[X, Y ]$ be a bipartite graph with $|X| = |Y | = n$. If $\delta(G)\geq n/2$, then $G$ has
a perfect matching.
\end{LEM}

Given an integer $k\ge 1$,
a \emph{$k$-coloring} of  $G$ is an assignment of $k$ colors from $[1,k]$ to the vertices of $G$ such that adjacent vertices receive different colors. Given a $k$-coloring $\varphi$, and given any $i\in [1,k]$, we call $V_i =\{v\in V (G):\varphi(v) =i\}$ the $i$th color class of $\varphi$; if $|V_i|,|V_j|$ differ by at most one for any $i,j\in[1,k]$, then we say that $\varphi$ is \emph{equitable}.
Hajnal and Szemer\'edi~\cite{HS1970} in 1970 proved the following classic result.

\begin{thm}[Hajnal and Szemer\'edi~\cite{HS1970}]\label{thm-equitable} Let $G$ be a graph and let $k\geq \Delta(G) + 1$ be any
integer. Then $G$ has an equitable $k$-coloring.
\end{thm}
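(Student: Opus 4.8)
The plan is to prove the statement by induction on $n=|V(G)|$. As a convenient first step, I would reduce to the divisible case $k\mid n$: if $n=qk+r$ with $1\le r\le k-1$, let $G'$ be the disjoint union of $G$ with a clique on $k-r$ vertices, so $\Delta(G')\le k-1$ and $|V(G')|=(q+1)k$. In any equitable $k$-coloring of $G'$ the $k-r$ clique vertices receive $k-r$ distinct colors, one per class; deleting the clique then leaves $r$ classes of size $q+1$ and $k-r$ of size $q$, which is an equitable $k$-coloring of $G$. So I may assume $n=sk$ and aim for a proper coloring with all $k$ classes of size exactly $s$; the cases $\Delta(G)=0$ and $n\le k$ are immediate.

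Suppose for contradiction that $G$ has no equitable $k$-coloring. Since $k\ge\Delta(G)+1$, a greedy coloring exists; among all proper $k$-colorings choose one, $\varphi$, with classes $V_1,\dots,V_k$, minimizing $\Phi(\varphi)=\sum_i\binom{|V_i|}{2}$. Non-equitability forces classes $V_+$ and $V_-$ with $|V_+|\ge|V_-|+2$. I would then set up the auxiliary digraph $D$ on $\{V_1,\dots,V_k\}$ with an arc $V_i\to V_j$ precisely when some vertex of $V_i$ has no neighbor in $V_j$ (hence may be legally recolored into $V_j$). The key lemma is a chain swap: given a directed path $V_+=U_0\to U_1\to\cdots\to U_\ell$ with $|U_\ell|\le|V_+|-2$, one pushes a witness vertex of $U_0$ into $U_1$, then a (still valid) witness vertex of $U_1$ into $U_2$, and so on down to $U_\ell$; every intermediate class keeps its size and stays independent, while $|U_0|$ drops and $|U_\ell|$ rises by one, so $\Phi$ strictly decreases --- contradicting minimality. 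Hence no such path exists, so writing $A$ for the set of classes reachable from $V_+$ in $D$, we get $V_-\notin A$, every class in $A$ has size at least $|V_+|-1$, and --- as no $D$-arc leaves $A$ --- every vertex of $\bigcup A$ has a neighbor in every class outside $A$; in particular $G[\bigcup A]$ has maximum degree at most $|A|-1$.

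The remaining situation --- no augmenting path out of $V_+$, i.e. the ``dominated'' configuration just described --- is the genuine obstacle and the technical core of the theorem. A naive double count (each vertex of $\bigcup A$ sends at least $|B|$ edges into $\bigcup B:=V(G)\setminus\bigcup A$, while every vertex of $\bigcup B$ has degree at most $k-1$) constrains $|A|$, $|B|$ and the class sizes but does not by itself yield a contradiction --- it closes only a few boundary cases. To finish one has to exploit the structure on $\bigcup A$ much more carefully: apply the inductive hypothesis to $G[\bigcup A]$ with $|A|$ colors, re-partition, and iterate a carefully chosen potential through a bounded sequence of further local recolorings. This is exactly the multi-layered recoloring argument of Hajnal and Szemer\'edi~\cite{HS1970} (later streamlined by Kierstead and Kostochka), and I would follow their scheme to dispatch this case. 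I expect essentially all of the difficulty to lie there, rather than in the reductions or the single chain swap.
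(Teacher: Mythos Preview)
The paper does not prove this statement at all: Theorem~\ref{thm-equitable} is quoted as a classical result of Hajnal and Szemer\'edi, with a citation and no argument, and is used later only as a black box (to produce the spanning subgraph $Q$ via an equitable coloring of $\overline{G}$). So there is nothing in the paper to compare your proposal against.

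As for the proposal itself, it is not a proof but a roadmap that explicitly defers the hard part. Your reduction to the divisible case is fine, and the ``chain swap along a directed path in $D$'' step is the standard opening move. But you correctly identify that when no augmenting path leaves the reachable set $A$, the naive double count does not close the argument, and at that point you simply say you would ``follow their scheme'' (Hajnal--Szemer\'edi or Kierstead--Kostochka). That is precisely where all the content of the theorem lives; everything before it is routine. If the intent was to supply an independent proof, this is a genuine gap: you have restated the theorem's difficulty rather than resolved it. If the intent was merely to indicate how one would cite and invoke the result --- which is all the paper does --- then a one-line citation suffices and the surrounding scaffolding is unnecessary.
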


The remainder of this section focuses on establishing key results related to edge coloring graphs with precolored edges.

\begin{thm}[K\"{o}nig~\cite{K1916}]\label{konig}
	Every bipartite graph  $G$ satisfies $\chi'(G)=\Delta(G)$.
\end{thm}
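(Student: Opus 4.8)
The lower bound $\chi'(G)\ge \Delta(G)$ is immediate, since the $\Delta(G)$ edges incident with a vertex of maximum degree pairwise conflict, so the whole content is the upper bound $\chi'(G)\le \Delta(G)$. The plan is to prove this by induction on $|E(G)|$, the case $|E(G)|=0$ being trivial. Write $\Delta=\Delta(G)$. For the inductive step, fix an edge $e=uv$; by the inductive hypothesis $G-e$ has a proper edge coloring $\varphi$ using colors from $[1,\Delta]$, and the goal is to modify $\varphi$ so that the color of $e$ can be chosen as well.

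In $G-e$ we have $d_{G-e}(u)\le \Delta-1$ and $d_{G-e}(v)\le \Delta-1$, so some color $\alpha\in[1,\Delta]$ is missing at $u$ (used on no edge incident with $u$) and some color $\beta\in[1,\Delta]$ is missing at $v$. If $\alpha=\beta$, we just set $\varphi(e)=\alpha$ and are done, so assume $\alpha\ne\beta$; then $\beta$ appears at $u$ and $\alpha$ appears at $v$. Now consider the subgraph $H$ of $G-e$ consisting of all edges colored $\alpha$ or $\beta$. Every vertex has degree at most $2$ in $H$, so each component of $H$ is a path or an even cycle. Since $u$ is incident in $H$ only with a $\beta$-edge and $v$ only with an $\alpha$-edge, each of $u,v$ has degree exactly $1$ in $H$; let $P$ be the component of $H$ containing $u$, which is therefore a path having $u$ as an endpoint, with edge colors alternating $\beta,\alpha,\beta,\dots$ starting from $u$ (if $u$ has no $\beta$-edge then $P=\{u\}$ and the discussion below is vacuous).

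The crucial claim is that $v\notin V(P)$. If $v$ did lie on $P$, then, having degree $1$ in $H$, it would be the other endpoint of $P$, and the edge of $P$ at $v$ would be its $\alpha$-edge; hence $P$ would be a $u$--$v$ path beginning with a $\beta$-edge and ending with an $\alpha$-edge, so of even length. Together with $e=uv$ this would produce an odd cycle in $G$, contradicting that $G$ is bipartite. This is the only place the bipartiteness hypothesis is used, and it is the crux of the argument; everything else is routine Kempe-chain bookkeeping. Given the claim, swap the colors $\alpha$ and $\beta$ along $P$: this keeps $\varphi$ a proper edge coloring of $G-e$, now $\beta$ is missing at $u$ (its $\beta$-edge became an $\alpha$-edge), and $\beta$ is still missing at $v$ since $v$ was untouched; setting $\varphi(e)=\beta$ completes a proper edge $\Delta$-coloring of $G$. (Alternatively, one may embed $G$ into a $\Delta$-regular bipartite multigraph, peel off one perfect matching at a time as a color class using Hall's theorem, and restrict back to $G$; I would note this route but carry out the induction above, which needs no auxiliary multigraph construction.)
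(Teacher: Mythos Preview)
Your proof is correct and is the standard Kempe-chain argument for K\"onig's theorem. The paper itself does not prove this result; it simply quotes it with a citation to K\"onig~\cite{K1916} and uses it as a black box later (in Step~5, to color the bipartite remainder~$R$). So there is no paper proof to compare against, and your argument stands on its own.

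One small remark on presentation: in the parenthetical where $u$ has no $\beta$-edge you write $P=\{u\}$, but in that situation $\beta$ is already missing at $u$ as well as at $v$, so you could have colored $e$ with $\beta$ immediately and never entered the $\alpha\ne\beta$ case with this configuration. It does no harm, since the swap on a trivial path is vacuous, but you could simply omit that aside.
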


An edge-$k$-coloring of a graph $G$ is \emph{equitable} if each color class  has size $\lfloor |E(G)|/k \rfloor$ or  $\lceil |E(G)|/k \rceil$.
The following theorem by McDiarmid from 1972 states that every graph has an equitable edge coloring  as long as at least $\chi'(G)$ colors are given.

\begin{thm}[McDiarmid~\cite{M1972}]\label{lem:equa-edge-coloring}
 	Let $G$ be a graph and  $k\ge \chi'(G)$ be an integer.  Then $G$ has an equitable edge-$k$-coloring.
\end{thm}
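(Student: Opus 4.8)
The plan is to start from any proper edge-$k$-coloring — one exists because $k \ge \chi'(G)$ — and then rebalance the color classes by local recolorings along two-colored components until every pair of classes has nearly equal size. Concretely, among all proper edge-$k$-colorings of $G$, I would fix one, say $\varphi$, that minimizes the potential $\Phi(\varphi) = \sum_{c=1}^{k} |\varphi^{-1}(c)|^2$; write $C_c = \varphi^{-1}(c)$ for its color classes. I claim this $\varphi$ is already equitable. Note first that it suffices to show $\bigl||C_i| - |C_j|\bigr| \le 1$ for every pair $i,j$: if that holds, all classes take one of two consecutive sizes, and since they partition $E(G)$ into $k$ parts these sizes must be $\lfloor |E(G)|/k \rfloor$ and $\lceil |E(G)|/k\rceil$, which is exactly equitability.

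So suppose for contradiction that $|C_i| \ge |C_j| + 2$ for some colors $i,j$. Consider the spanning subgraph $H$ of $G$ with edge set $C_i \cup C_j$. Every vertex has at most one incident edge of each of these two colors, so $\Delta(H) \le 2$ and $H$ is a disjoint union of paths and cycles, along each of which the colors $i$ and $j$ alternate. A cycle component is properly $2$-edge-colored, hence has even length and contributes equally many $i$- and $j$-edges; a path with an even number of edges likewise splits evenly; only a path with an odd number of edges contributes an imbalance, namely exactly $+1$ toward whichever of the two colors sits at its two ends. Summing over components gives $|C_i| - |C_j| = \sum(\pm 1)$ over the odd-length path components, and since this sum is at least $2$, some odd path $P$ carries one more $i$-edge than $j$-edge.

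I would then recolor: on $P$ swap the colors $i$ and $j$, leaving every other edge (in particular every edge outside $H$) untouched. The result $\varphi'$ is still a proper edge coloring — within $H$ a component stays properly $2$-colored after a swap, and at vertices where $P$ meets edges outside $H$ those edges avoid both $i$ and $j$ — and it has $|C_i'| = |C_i| - 1$ and $|C_j'| = |C_j| + 1$, with all other classes unchanged. A one-line computation gives $\Phi(\varphi') - \Phi(\varphi) = -2(|C_i| - |C_j|) + 2 \le -2 < 0$, contradicting the minimality of $\Phi(\varphi)$. Hence no such pair $i,j$ exists and $\varphi$ is equitable. (Equivalently, without invoking minimality one performs this swap repeatedly; since $\Phi$ is a nonnegative integer that strictly drops at each step, the process halts, necessarily at an equitable coloring.) The only genuinely delicate points — and where I would be most careful — are the structural claim that among the path and cycle components only the odd paths break the balance, so that a single swap moves exactly one edge from $C_i$ to $C_j$, and the routine but necessary check that recoloring just one component preserves properness.
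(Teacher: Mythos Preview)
Your argument is correct and is essentially the standard Kempe-chain proof of McDiarmid's theorem: pick a coloring minimizing the sum of squared class sizes, and if two classes differ by at least two, the $(i,j)$-subgraph must contain an odd path tilted toward $i$, on which a swap strictly decreases the potential. The check that swapping on a single component preserves properness is exactly as you describe, since any edge at a vertex of $P$ colored $i$ or $j$ already lies in $H$ and hence in $P$ itself.

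There is nothing to compare against in the paper: the statement is quoted as a known result of McDiarmid and is used as a black box, with no proof given. Your write-up supplies precisely the classical argument one would expect, so it is a fine self-contained substitute for the citation.
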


An equitable edge coloring can always be achieved by modifying an arbitrary given edge coloring.
We will establish a result analogous to Theorem~\ref{lem:equa-edge-coloring} by modifying a given edge coloring, while preserving the colors of specific edges. To state our result, we provide some definitions.

Let $G$ be a graph, $k\ge 0$ be an integer, and   $\varphi$ be an edge-$k$-coloring of $G$.  For  $v\in V (G)$, denote by $\overline{\varphi}(v)$ the set of colors \emph{missing} at $v$, that is, the set of all colors in $[1,k]$ that are not used on any edge incident to $v$ under $\varphi$. For $i\in [1,k]$,
  denote by $\overline{\varphi}^{-1}(i)$ the set of all vertices in $G$ which are missing color $i$ under $\varphi$.
  For a vertex set $X\subseteq V (G)$, define $\overline{\varphi}(X) =\cup_{ v\in X} \overline{\varphi}(v)$ to be the set of missing colors of $X$. Given any pair of distinct colors $\alpha,\beta\in [1,k]$,  each component of
  the subgraph of $G$  induced on all edges colored by $\alpha$ or $\beta$ is either a cycle or a path.
  Each of such component is called an \textit{$(\alpha,\beta)$-chain of $G$ with respect to $\varphi$}.
  Interchanging  $\alpha$ and $\beta$
on an $(\alpha,\beta)$-chain $C$ of $G$ gives a new edge-$k$-coloring, which is denoted by
$\varphi/C$.
This operation  is called a \emph{Kempe change}.

%For $x,y\in V(G)$, if $x$ and $y$
%are contained in the same  $(\alpha,\beta)$-chain with respect to $\varphi$, we say $x$
%and $y$ are \emph{$(\alpha,\beta)$-linked}.
%Otherwise, they are \emph{$(\alpha,\beta)$-unlinked}.
If an $(\alpha,\beta)$-chain  $P$ is a path with one endvertex as $x$, we also denote it by $P_x(\alpha,\beta,\varphi)$, and we just write $P_x(\alpha,\beta)$ if $\varphi$ is understood.   For a vertex $u$ and an edge $uv$ contained in $P_x(\alpha,\beta,\varphi)$,
we write
{$\mathit {u\in P_x(\alpha,\beta, \varphi)}$} and  {$\mathit {uv\in P_x(\alpha,\beta, \varphi)}$}.

We are now ready to state and prove the following result. The case $m=1$ of the result was proved in~\cite{DMS2024}.

\begin{LEM}\label{PartialEC}  Let $G$ be a graph and $\varphi_0$ be an edge-$k$-coloring of $G$. Suppose $E_0\subseteq E(G)$
such that each color in $[1,k]$ is used on at most $m$ edges of $E_0$, where $m\ge 0$ is an integer.
 Then $G$  has an edge-$k$-coloring $\varphi$  such that
\[||\overline{\varphi}^{-1}(i)|-|\overline{\varphi}^{-1}(j)||\leq 4m+1\]
for all $i,j\in[1,k]$
and that $\varphi(e)=\varphi_0(e)$ for any  $e\in E_0$.
\end{LEM}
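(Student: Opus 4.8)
The plan is to first reduce the statement to one about the sizes of the color classes, and then to equalize those sizes through a sequence of Kempe changes that are kept away from $E_0$. For an edge-$k$-coloring $\varphi$ and $i\in[1,k]$, the color class $\varphi^{-1}(i)$ is a matching, hence it saturates exactly $2|\varphi^{-1}(i)|$ vertices and $|\overline{\varphi}^{-1}(i)|=|V(G)|-2|\varphi^{-1}(i)|$. Therefore $\big||\overline{\varphi}^{-1}(i)|-|\overline{\varphi}^{-1}(j)|\big|=2\big||\varphi^{-1}(i)|-|\varphi^{-1}(j)|\big|$ for all $i,j$, and it suffices to produce an edge-$k$-coloring $\varphi$ that agrees with $\varphi_0$ on $E_0$ and whose color classes pairwise differ in size by at most $2m$ (we may assume $m\ge 1$, which already yields the claimed bound with room to spare).

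Starting from $\varphi_0$, I would iterate the following. Put $\Phi(\varphi)=\sum_{i=1}^{k}|\varphi^{-1}(i)|^2\in\mathbb{Z}_{\ge 0}$. If all color classes already pairwise differ in size by at most $2m$, stop; this is the desired coloring. Otherwise choose colors $\alpha,\beta$ with $|\varphi^{-1}(\alpha)|-|\varphi^{-1}(\beta)|\ge 2m+1$ and let $H$ be the subgraph of $G$ on the edges colored $\alpha$ or $\beta$, whose components are exactly the $(\alpha,\beta)$-chains of $G$ with respect to $\varphi$ --- paths and even cycles. On a cycle, and on a path of even length, the numbers of $\alpha$- and of $\beta$-edges agree; a path of odd length has exactly one more edge of the color shared by its two end-edges. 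So, writing $p_\alpha$ (resp.\ $p_\beta$) for the number of odd paths of $H$ whose end-edges are colored $\alpha$ (resp.\ $\beta$), we get $p_\alpha-p_\beta=|\varphi^{-1}(\alpha)|-|\varphi^{-1}(\beta)|\ge 2m+1$, so $H$ contains at least $2m+1$ ``$\alpha$-heavy'' paths.

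Now the hypothesis enters: at most $m$ edges of $E_0$ are colored $\alpha$ and at most $m$ are colored $\beta$, so at most $2m$ edges of $E_0$ lie in $H$, hence at most $2m$ components of $H$ meet $E_0$; therefore some $\alpha$-heavy path $P$ is edge-disjoint from $E_0$. Replacing $\varphi$ by $\varphi/P$ leaves every edge of $E_0$ untouched (so the agreement with $\varphi_0$ on $E_0$, and the bound of $m$ edges of $E_0$ per color, are preserved), decreases $|\varphi^{-1}(\alpha)|$ by $1$, increases $|\varphi^{-1}(\beta)|$ by $1$, and changes no other class; since $|\varphi^{-1}(\alpha)|-|\varphi^{-1}(\beta)|\ge 2m+1\ge 2$, this strictly decreases $\Phi$. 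As $\Phi$ is a nonnegative integer, the process terminates, and its output is an edge-$k$-coloring $\varphi$ with $\varphi|_{E_0}=\varphi_0|_{E_0}$ whose color classes pairwise differ in size by at most $2m$; by the first paragraph, $\big||\overline{\varphi}^{-1}(i)|-|\overline{\varphi}^{-1}(j)|\big|\le 4m\le 4m+1$ for all $i,j$. The one delicate point --- and the place where the hypothesis is really used --- is producing, at each iteration, an $\alpha$-heavy $(\alpha,\beta)$-chain that avoids $E_0$ entirely: the ``at most $m$ edges of $E_0$ per color'' assumption caps the number of blocked chains at $2m$, which is beaten by the $\ge 2m+1$ available $\alpha$-heavy chains, and it is exactly this bookkeeping that forces the threshold $2m+1$ and hence the final constant.
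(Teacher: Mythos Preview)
Your argument is correct and follows essentially the same Kempe-change strategy as the paper: both locate an $(\alpha,\beta)$-chain disjoint from $E_0$ whenever two color classes are too unbalanced and swap along it. The bookkeeping differs only cosmetically---the paper works with the missing-color sets $\overline{\varphi}^{-1}(i)$ and terminates via a lexicographic extremal choice (minimize the maximum gap $g_\varphi$, then the number $h_\varphi$ of pairs attaining it), whereas you translate to color-class sizes and use the sum-of-squares potential $\Phi$; your count via $p_\alpha-p_\beta=|\varphi^{-1}(\alpha)|-|\varphi^{-1}(\beta)|$ is arguably a little slicker than the paper's vertex count.

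One small point: the parenthetical ``we may assume $m\ge 1$'' is not justified, and your potential $\Phi$ does not strictly decrease when $m=0$ and the gap is exactly $1$. For $m=0$ the hypothesis forces $E_0=\emptyset$, and the assertion becomes exactly McDiarmid's equitable edge-coloring theorem (Theorem~\ref{lem:equa-edge-coloring}); alternatively, the paper's lexicographic termination argument handles all $m\ge 0$ uniformly, so you could simply swap your potential for that.
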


\begin{proof} We call an edge-$k$-coloring of $G$ \emph{good} if each color is used on at most $m$ distinct edges of $E_0$. Among all good edge-$k$-colorings of $G$, we choose $\varphi$ so that  $\varphi(e)=\varphi_0(e)$ for any edge $e\in E_0$ and
\[g_{\varphi}:= \max\{||\overline{\varphi}^{-1}(i)|-|\overline{\varphi}^{-1}(j)||\, \mid  i,j\in [1,k]\}
\]
is minimum. If $g_{\varphi}\leq 4m+1$, then we are done, so suppose not. Thus there are colors $\alpha,\beta \in[1,k]$ such that
\begin{equation}\label{eq1}||\overline{\varphi}^{-1}(\alpha)|-|\overline{\varphi}^{-1}(\beta)||=g_{\varphi}\geq 4m+2.
\end{equation}
We may further assume that $\varphi$ has been chosen so as to minimize the number of pairs $\alpha,\beta$ satisfying~\eqref{eq1}; let $h_{\varphi}$ be the number of these pairs.

Without loss of generality, suppose that $|\overline{\varphi}^{-1}(\alpha)|-|\overline{\varphi}^{-1}(\beta)|\geq 4m+2$. There are at most
$2m$ edges from $E_0$ that are colored from $\{\alpha,\beta\}$. Thus there are at most $4m$ vertices from $\overline{\varphi}^{-1}(\alpha)$ that are endvertices of $2m$  $(\alpha,\beta)$-chains (path chains) that each contains an edge of $E_0$.
At most $|\overline{\varphi}^{-1}(\beta)|$ vertices from $\overline{\varphi}^{-1}(\alpha)$ are endvertices of $|\overline{\varphi}^{-1}(\beta)|$ $(\alpha,\beta)$-chains involving a vertex from $\overline{\varphi}^{-1}(\beta)$. Thus there exist distinct vertices $x, y\in \overline{\varphi}^{-1}(\alpha)$ occurring in one common $(\alpha,\beta)$-chain that contains no edge of $E_0$. Let $P$ be the $(\alpha,\beta)$-chain with endvertices $x, y$, and let $\psi=\varphi/ P$. As $P$ does not contain any edges of $E_0$,
we have $\varphi(e)=\psi(e)$ for each $e\in E_0$.
Hence $\psi$ is a good
edge-$k$-coloring of $G$. We claim that $g_{\psi}\leq g_{\varphi}$ and $h_{\psi}< h_{\varphi}$, which yields a contradiction to our choice of $\varphi$. By our choice of $P$, we know that $|\overline{\psi}^{-1}(\alpha)|=|\overline{\varphi}^{-1}(\alpha)|-2$ and $|\overline{\psi}^{-1}(\beta)|=|\overline{\varphi}^{-1}(\beta)|+ 2$.
Given \eqref{eq1} and the definition of $g_{\varphi}$, for any other color $\gamma\in [1,k]\setminus \{\alpha,\beta\}$, we have
\[|\overline{\varphi}^{-1}(\alpha)|\geq |\overline{\varphi}^{-1}(\gamma)|\geq |\overline{\varphi}^{-1}(\beta)|.\]
But now, for any pair of colors $\{\alpha',\beta'\}$ satisfying \eqref{eq1}, we have
\[||\overline{\psi}^{-1}(\alpha')|-|\overline{\psi}^{-1}(\beta')||=\left\{\begin{array}{cc}
                                                      g_{\varphi} & \mbox{if}\ \{\alpha',\beta'\}\cap \{\alpha,\beta\}=\emptyset,\\
                                                      g_{\varphi}-2 & \mbox{if}\ \alpha'= \alpha\ \mbox{and}\ \beta'\neq \beta,\\
                                                       g_{\varphi}-2 & \mbox{if}\ \alpha'\neq \alpha\ \mbox{and}\ \beta'= \beta, \\
                                                       g_{\varphi}-4 & \mbox{if}\ \alpha'= \alpha\ \mbox{and}\ \beta'= \beta.
                                                    \end{array}
\right.\]
Hence we have $g_{\psi} \leq g_{\varphi}$ and $h_{\psi}< h_{\varphi}$, a contradiction to our choice of $\varphi$.
\end{proof}

Our next result will be proved using the concept of multifans.
The technique of multifan is widely used in edge coloring problems and was firstly appeared in the book of Stiebitz et al. \cite{StiebSTF-Book}  as a generalization of Vizing-fan.  Let $\varphi$ be an edge-$k$-coloring of $G-e_0$ for some edge $e_0\in E(G)$, and $r\in V(G)$
be a vertex.
A \emph{multifan} centered at $r$ with respect to $\varphi$ is a sequence $F_{\varphi}(r, s_0 : s_p) = (r, e_0, s_0, e_1, s_1,\ldots, e_p, s_p)$ with $p \geq 0$ consisting of distinct edges $e_0,\ldots, e_p$ with $e_i = rs_i$ for all $i\in [0,p]$, where $e_0$ is left uncolored
under $\varphi$, and for every edge $e_i$ with $i \in [1,p]$, there exists $j\in [0,i-1]$ such that $\varphi(e_i)\in \overline{\varphi}(s_j)$. Note that if $F_{\varphi}(r, s_0 : s_{p})$ is a multifan, then for any integer $p^{*}\in [0,p],F_{\varphi}(r, s_0 : s_{p^*})$ is also a multifan. Given $0\leq t\leq p$, a subsequence $(s_0 = s_{\ell_0},s_{\ell_1},\ldots,s_{\ell_t})$ with the property
 that $\varphi(e_{\ell_i}) =\alpha\in\overline{\varphi}(s_{\ell_{i-1}})$ for each $i\in [1,t]$, is called a \emph{linear sequence} of $F_{\varphi}(r, s_0 : s_p)$. Given such a linear sequence, and given $h \in \{1, \ldots, t\}$, we \emph{shift from $s_{\ell_h}$ to $s_0$} by recoloring edge $e_{\ell_{i-1}}$ with $\varphi(e_{\ell_{i}})$ for all $i\in\{1, \ldots, h\}$.
Note that the resulting edge coloring is an edge-$k$-coloring where $e_0$ is colored and $e_{\ell_h}$ is not.

\begin{LEM}\label{lem2.2}
Let $G$ be a graph, $Q\subseteq G$ and $k,c,m\geq 1$ be integers.
Suppose that $\Delta(Q) \le c$ and that  $Q$  has an edge-$k$-coloring  $\varphi_0$ such that each color $i\in [1,k]$ is used on at most $m$ edges of $Q$.
Then we can extend the coloring $\varphi_0$ of $Q$ to an edge coloring of $G$ using
at most $\max\{k,\Delta(G)+4cm-1\}$ colors.
\end{LEM}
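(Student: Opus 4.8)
The plan is to argue by contradiction, combining a maximal partial extension with a multifan at a vertex, while being careful to route every Kempe change and every fan shift away from the precolored edges of $Q$. Put $k^{*}=\max\{k,\Delta(G)+4cm-1\}$. Since $\varphi_{0}$ already colors $Q$ with colors from $[1,k^{*}]$, there is at least one edge coloring of $G$, with colors in $[1,k^{*}]$, that agrees with $\varphi_{0}$ on $E(Q)$ and is defined on some subset of $E(G)$; let $\varphi$ be such a coloring defined on as many edges as possible. If $\varphi$ is total we are done, so assume some edge $e_{0}=rs_{0}$ is uncolored, and note $e_{0}\notin E(Q)$ because $\varphi_{0}$ colors all of $E(Q)$. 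Since $e_{0}$ is uncolored at both ends, $r$ and $s_{0}$ each miss at least $k^{*}-\Delta(G)+1\ge 4cm$ colors; and if $\overline{\varphi}(r)\cap\overline{\varphi}(s_{0})\neq\emptyset$ then coloring $e_{0}$ with a common missing color contradicts maximality, so we may assume $\overline{\varphi}(r)\cap\overline{\varphi}(s_{0})=\emptyset$.

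Next I would build a maximal multifan $F_{\varphi}(r,s_{0}:s_{p})=(r,e_{0},s_{0},e_{1},s_{1},\dots,e_{p},s_{p})$ at $r$, subject to the extra rule that no $e_{i}$ with $1\le i\le p$ lies in $E(Q)$ (a ``$Q$-avoiding'' maximal multifan; simply stop whenever the only way to extend the fan is through a $Q$-edge). The point of the restriction is that every shift operation along $F$ recolors only the edges $e_{1},\dots,e_{p}$, hence never disturbs $\varphi_{0}$. The goal is to establish, for this $\varphi$ and $F$: (P1) $\overline{\varphi}(r)\cap\overline{\varphi}(s_{i})=\emptyset$ for all $0\le i\le p$; (P2) $\overline{\varphi}(s_{i})\cap\overline{\varphi}(s_{j})=\emptyset$ for $0\le i<j\le p$; and (P3) $\bigl(\bigcup_{i}\overline{\varphi}(s_{i})\bigr)\setminus\overline{\varphi}(r)$ is contained in $\{\varphi(e_{1}),\dots,\varphi(e_{p})\}$ together with the at most $c$ colors appearing on $Q$-edges incident with $r$, so it has size at most $p+c$. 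Here (P1) for $i\ge 1$ follows by shifting $s_{i}$ to $s_{0}$ — legal because $F$ is $Q$-avoiding — and then coloring $rs_{i}$ with a common color $\alpha\in\overline{\varphi}(r)\cap\overline{\varphi}(s_{i})$: the shift only permutes fan colors, none of which equals $\alpha$, so $\alpha$ remains absent at $r$, and at $s_{i}$ the only change is that $rs_{i}$ becomes uncolored; this colors one more edge, a contradiction. Property (P3) is the familiar K\"onig-type maximality fact for multifans, weakened only to allow the $\le c$ $Q$-edges at $r$ that the fan was forbidden to use.

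The heart of the argument is (P2), and I expect it to be the main obstacle. Suppose $\delta\in\overline{\varphi}(s_{i})\cap\overline{\varphi}(s_{j})$ with $i<j$. For any $\alpha\in\overline{\varphi}(r)$, by (P1) we have $\alpha\neq\delta$ and $\alpha$ is present at both $s_{i}$ and $s_{j}$; since $r$ has a $\delta$-edge and no $\alpha$-edge it lies on exactly one $(\alpha,\delta)$-chain, so at least one of $P_{s_{i}}(\alpha,\delta)$, $P_{s_{j}}(\alpha,\delta)$ avoids $r$. Interchanging $\alpha$ and $\delta$ on such a chain makes the corresponding $s_{\ell}$ miss $\alpha$; as this chain contains no edge at $r$, the fan $F$ survives the interchange, after which we shift $s_{\ell}$ to $s_{0}$ and color $rs_{\ell}$ with $\alpha$, once more coloring one extra edge and contradicting maximality. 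The only obstruction is that the chain we are forced to use might contain an edge of $Q$ — necessarily one colored $\alpha$ or $\delta$ — and this is exactly where the palette size enters: ranging over the $\ge 4cm$ candidate colors $\alpha\in\overline{\varphi}(r)$, a counting argument using $|E(Q)\cap\varphi^{-1}(\gamma)|\le m$ for every color $\gamma$ and the degree bound $c$ on incident $Q$-edges is needed to show that at most $4cm-1$ of these $\alpha$ are spoiled by a $Q$-edge on the relevant chain, so a usable $\alpha$ exists. Getting this counting tight enough to keep the slack at $4cm-1$ is the delicate point of the whole proof.

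Finally, (P1)--(P3) together with the degree bounds yield
\[
(k^{*}-\Delta(G)+1)+p\,(k^{*}-\Delta(G))\ \le\ \sum_{i=0}^{p}|\overline{\varphi}(s_{i})|\ =\ \Bigl|\bigcup_{i=0}^{p}\overline{\varphi}(s_{i})\Bigr|\ \le\ p+c,
\]
where the left inequality uses $|\overline{\varphi}(s_{0})|\ge k^{*}-\Delta(G)+1$ and $|\overline{\varphi}(s_{i})|\ge k^{*}-\Delta(G)$ for $i\ge 1$, the middle equality uses the pairwise disjointness (P2), and the right inequality uses (P1) and (P3). Rearranged this reads $(k^{*}-\Delta(G)+1-c)+p\,(k^{*}-\Delta(G)-1)\le 0$, but $k^{*}\ge\Delta(G)+4cm-1$ makes both summands nonnegative and the first strictly positive (since $4cm-1\ge c$), a contradiction. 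Hence $\varphi$ colors all of $G$ using at most $k^{*}$ colors, which is the claim. As flagged above, the only genuinely hard step is (P2): Vizing-type recolorings threaten to overwrite $\varphi_{0}$, and the content of the lemma is precisely that $4cm-1$ extra colors beyond $\Delta(G)$ provide enough room to steer every such recoloring around the $\le c$ incident $Q$-edges and the $\le m$ edges of $Q$ of any single color.
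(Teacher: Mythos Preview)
Your overall plan --- take a maximal partial extension, build a $Q$-avoiding multifan at an uncolored edge, and derive a contradiction by comparing lower and upper bounds on the set of colors missing at fan vertices --- is exactly the paper's strategy. Your (P1) is the paper's Claim~1 and your (P3) is the paper's upper-bound argument, both fine.

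The gap is (P2). Your proposed counting over $\alpha\in\overline{\varphi}(r)$ does not close: a single $\delta$-colored $Q$-edge $f=xy$ can lie on $P_{s_\ell}(\alpha,\delta)$ for \emph{every} $\alpha$ simultaneously. For instance, if $s_i$'s $\alpha_k$-neighbor is $a_k$, each $a_kb_k$ is a $\delta$-edge, and each $b_k$ has an $\alpha_k$-edge to $x$, then $P_{s_i}(\alpha_k,\delta)$ passes through $f$ for all $k$; there is no bound on the number of spoiled $\alpha$'s in terms of $c$ and $m$. So pairwise disjointness of the sets $\overline{\varphi}(s_i)$ cannot be established this way, and indeed the paper does not prove it.

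What the paper proves instead is the weaker multiplicity bound: for every color $\gamma$, at most $4m-1$ fan vertices miss $\gamma$. The trick is to reverse the roles in your argument --- fix a \emph{single} $\alpha\in\overline{\varphi}(r)$ and exploit the \emph{many} fan vertices $y_1,\dots,y_{4m}$ missing $\gamma$. The $(\alpha,\gamma)$-chains from $r,y_1,\dots,y_{4m}$ give $4m+1$ path-endpoints; the at most $2m$ $Q$-edges colored $\alpha$ or $\gamma$ lie in at most $2m$ chains with at most $4m$ endpoints, so by pigeonhole one chain is $Q$-free, and the usual Kempe-and-shift contradiction applies. In the final count this replaces your equality $\sum|\overline{\varphi}(s_i)|=|\bigcup\overline{\varphi}(s_i)|$ by $|\bigcup\overline{\varphi}(s_i)|\ge\frac{1}{4m-1}\sum|\overline{\varphi}(s_i)|$; since each $|\overline{\varphi}(s_i)|\ge 4cm-1$ (and $|\overline{\varphi}(s_0)|\ge 4cm$), the factor $\frac{4cm-1}{4m-1}\ge c$ is exactly what is needed to beat the upper bound $p+c$ from (P3).
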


\begin{proof} Let $\Delta = \Delta(G)$, $\ell=\max\{k,\Delta(G)+4cm-1\}$ and  the color set be  $[1,\ell]$.
We extend $\varphi_0$ to a partial edge-$\ell$-coloring $\varphi$ of $G$ by preserving colors of $E(Q)$ such that $\varphi$ assigns colors to as many edges as possible.

If every edge in $G$ is colored under $\varphi$ then we are done. Now assume that there is some edge $e_0=uv$ that is uncolored by $\varphi$.  Since $e_0$ is uncolored, we have  $e_0\not\in E(Q)$. Let $F$ be a maximal multifan centered at $u$ with respect to $e_0$ and $\varphi$.

Since $\varphi$ has $\ell\geq\Delta+4cm-1$ colors (and $uv$ is uncolored), we know that $|\overline{\varphi}(v)|\geq 4cm$, and each color in $\overline{\varphi}(v)$ must be present at $u$ (otherwise  $\varphi$ could be extended by adding $e_0 = uv$). So there exist linear sequences of $F$ containing no edge of $E(Q)$. Let $F_1 \subseteq F$ be a multifan induced by all the linear sequences of $F$ that don't contain edges of $E(Q)$. Denote by $F_1 = (u, uw_0, w_0, uw_1, w_1, \ldots, uw_t, w_t)$, where $w_0 = v$.

\begin{claim}\label{claim1}Suppose $F_2\subseteq F_1$ corresponds to the union of any number of linear sequences of $F$. Then $\overline{\varphi}(u)\cap \overline{\varphi}(w_i) = \emptyset$ for any $w_i\in V (F_2)$.
\end{claim}

\begin{proof}  Suppose to the contrary that  $\overline{\varphi}(u)\cap \overline{\varphi}(w_i)\neq \emptyset$ for some $i$.
Then there exists a linear sequence $(u, w_0, w_{i_1},\ldots, w_{i_s})$ of $F_2$ for which $w_{i_s}= w_i$. Shift in $(u, w_0, w_{i_1},\ldots, w_{i_s})$ from $w_{i_s}$
to $w_0$ and then color the edge $uw_i$ by a color in $\overline{\varphi}(u)\cap \overline{\varphi}(w_i)$. The resulting partial edge-$\ell$-coloring $\varphi'$ of $G$ is an extension of $\varphi_0$, since we excluded the edges of $E(Q)$ from $F_1$ (and $F_2$). However,  $\varphi'$ colors more edges than $\varphi$ does, a contradiction to the choice of $\varphi$.
\end{proof}

\begin{claim}\label{claim2} For any color $\gamma\in [1,\ell]$, there are at most $4m-1$ vertices from $\{w_0,\ldots, w_t\}$ that are all missing $\gamma$ under $\varphi$.
\end{claim}

\begin{proof}Suppose to the contrary that there exists such a color $\gamma$ that is missed on at least $4m$ vertices from $\{w_0,\ldots, w_t\}$. Let $y_1, y_2, \ldots, y_{4m}$ be $4m$ distinct vertices of $\{w_0,\ldots, w_t\}$ all of which are missing $\gamma$ under $\varphi$. Suppose, without loss of generality, that $y_1$ is the first vertex in the order $w_0,\ldots, w_t$ for which $\gamma\in \overline{\varphi}(y_1)$.
In particular, this implies that $F_1(u, w_0 : y_1)$ does not contain any edge that is colored by $\gamma$. Let $\alpha\in \overline{\varphi}(u)$, then $\alpha\neq\gamma$ by Claim~\ref{claim1}. Consider the set $\mathcal{P}$ of  $(\alpha, \gamma)$-chains
starting at $u, y_1,\ldots, y_{4m}$, respectively. At most $m$ edges of $E(Q)$ are colored by $\alpha$, and at most $m$ edges of $E(Q)$ are colored by $\gamma$. So there are at most $4m$ vertices in $\{u,w_0,y_1,\ldots,y_{4m}\}$ which can be endvertices of chains in $\mathcal{P}$. It follows that there is at least one chain $P \in \mathcal{P}$ not containing any edges from $E(Q)$. Let $\varphi'= \varphi/ P$ and consider two different cases according to whether or not $u$ is an endvertex of $P$.

{\bf \noindent Case 1}: The vertex $u$ is an endvertex of $P$.

Suppose first that the other endvertex of $P$ is $y_1$. Then in $\varphi'$, $u$ is missing $\gamma$. So in particular no edges colored with $\gamma$ appear in $F_1$. If there were any before, then they must have been recolored to $\alpha$. However since these edges all occur after $y_1$ in the order of $F_1$, and $y_1$ is missing $\alpha$ in $\varphi'$, $F_1$ is still a multifan in $G$ under $\varphi'$. However now Claim \ref{claim1} is not satisfied for $\varphi'$,   a contradiction to our choice of $\varphi$.

Now assume that the other endvertex of $P$ is not $y_1$. Note that $F_1(u, w_0 : y_1)$ is a multifan with respect to $\varphi'$ containing no edges of $E(Q)$. But now the color $\gamma$ is missing at both $u$ and $y_1$ under $\varphi'$, so Claim \ref{claim1} is not satisfied for $\varphi'$,   a contradiction to our
choice of $\varphi$.

{\bf \noindent Case 2}: The vertex $u$ is not an endvertex of $P$.

In this case $P$ does not contain any edges of $F_1$.

First suppose that $y_1$ is not an endvertex of $P$. Then $F_1$ is still a multifan with respect to $\varphi'$ containing no edges of $E(Q)$. However, the color $\alpha$ is missing at $u$ and at least one other vertex from $\{y_2,\ldots, y_{4m}\}$ under $\varphi'$. This means that Claim \ref{claim1} is not satisfied for $\varphi'$, leading to a contradiction to  our choice of $\varphi$.

Now assume that $y_1$ is an endvertex of $P$. Here, at least $F_1(u, w_0 : y_1)$ is a multifan with respect to $\varphi'$ containing no edges of $E(Q)$. But now the color $\alpha$ is missing at both $u$ and $y_1$ under $\varphi'$, so Claim \ref{claim1} is not satisfied for $\varphi'$,   a contradiction to  our choice
of $\varphi$.
\end{proof}

Claims \ref{claim1} and \ref{claim2} imply  that
\begin{equation}\label{eq0}
|\overline{\varphi}(V (F_1))| \geq |\overline{\varphi}(u)| + \frac{1}{4m-1}\sum\limits_{i=0}^{t}
|\overline{\varphi}(w_i)|.
\end{equation}
Let $G'$ be the graph induced on the set of  all the colored edges of $G$.  Then $|\overline{\varphi}(z)|= \ell-d_{G'}(z)\geq\Delta+4cm-1- d_{G'}(z)$ for all $z \in V (G)$. Since $d_{G'}(z) \leq \Delta$, we know that $|\overline{\varphi}(z)|\geq 4cm-1$ for
any $z$. Since the edge $uw_0$ is uncolored, we further know that $|\overline{\varphi}(w_0)|\geq 4cm$. So from~\eqref{eq0},
we get
\begin{eqnarray}
|\overline{\varphi}(V (F_1))| &\geq& (\Delta+4cm-1-d_{G'}(u))+\frac{4cm}{4m-1} +\frac{1}{4m-1} \cdot (4cm-1)t \nonumber\\
&>&\Delta+4cm+c-1+ct-d_{G'}(u).\label{eq01}
\end{eqnarray}

On the other hand, there are exactly $t$ colored edges in $F_1$. So there are exactly $d_{G'}(u)-t$ colored edges incident to $u$ that are not included in $F_1$. Furthermore, at most $c$ of these excluded colored edges could be in $E(Q)$ as $d_{Q}(u)\leq c$. By  the definition of $F_1$, none of the above excluded
colors are in $\overline{\varphi}(V (F_1))$. Hence we have found a set of at least $d_{G'}(u) -t- c$ colors from $[1,\Delta+4cm-1]$ that are not in $\overline{\varphi}(V (F_1))$. So
\[|\overline{\varphi}(V (F_1))| \leq (\Delta + 4cm-1) - (d_{G'}(u)- t -c) = \Delta +4cm+ t+c-1- d_{G'}(u),\]
contradicting \eqref{eq01}.
\end{proof}

As the number of vertices covered by a matching is always even, we have the following   lemma.

\begin{LEM}\label{Parity} Let $G$ be a graph and $\varphi$ be an edge-$k$-coloring of $G$ for some $k\geq \Delta(G)$. Then $|\overline{\varphi}^{-1}(i)|\equiv |V(G)|\ ({\rm mod}\ 2)$ for every color $i\in[1,k]$.
\end{LEM}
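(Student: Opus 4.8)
The plan is to exploit the single structural fact already flagged before the statement: in any proper edge coloring, a color class is a matching, and a matching saturates an even number of vertices. So I would fix an arbitrary color $i\in[1,k]$ and set $M_i=\{e\in E(G)\mid \varphi(e)=i\}$. Since $\varphi$ is proper, no two edges of $M_i$ meet at a common vertex, hence $M_i$ is a matching of $G$ (possibly empty, if the color $i$ happens to be unused — a case that causes no trouble). The next step is to identify the saturated set: a vertex $v$ satisfies $i\notin\overline{\varphi}(v)$ precisely when some edge at $v$ is colored $i$, i.e. precisely when $v\in V(M_i)$. Therefore $V(M_i)=V(G)\setminus\overline{\varphi}^{-1}(i)$.

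From here the conclusion is immediate: $|V(M_i)|=2|M_i|$ is even, so $|V(G)|-|\overline{\varphi}^{-1}(i)|=|V(M_i)|$ is even, which is exactly $|\overline{\varphi}^{-1}(i)|\equiv|V(G)|\pmod 2$. Since $i$ was arbitrary, the lemma follows. I do not anticipate any genuine obstacle here; the only point deserving a word of care is the trivial observation that the hypothesis $k\ge\Delta(G)$ is used only to guarantee that an edge-$k$-coloring exists in the first place, and that colors $i$ not appearing on any edge correspond to the empty matching, for which the parity statement reads $|\overline{\varphi}^{-1}(i)|=|V(G)|$ and holds vacuously.
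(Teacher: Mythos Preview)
Your proof is correct and is exactly the argument the paper has in mind: the paper does not write out a proof but simply prefaces the lemma with the remark that the number of vertices covered by a matching is always even. Your only side comment about the hypothesis $k\ge\Delta(G)$ is slightly off (it does not by itself guarantee an edge-$k$-coloring exists, and in fact the parity conclusion holds for any proper edge coloring regardless of $k$), but this does not affect the proof.
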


We will also need the following partition lemma by the second author.

\begin{LEM}[Shan~\cite{S2022}]\label{lem:partition}
There exists a positive integer $n_0$ such that for all $n\geq n_0$ the following holds. Let $G$ be a graph on $2n$ vertices, and $N = \{x_1, y_1,\ldots, x_t, y_t\}\subseteq V(G)$, where $1\leq t\leq n$ is an integer.
Then $V(G)$ can be partitioned into two parts $A$ and
$B$ satisfying the properties below:
\begin{enumerate}
\item [{\rm (i)}] $|A| = |B|$;
\item [{\rm (ii)}] $|A\cap \{x_i, y_i\}| = 1$ for each $i\in [1,t]$;
\item [{\rm (iii)}] $|d_A(v)-d_B(v)| \leq n^{\frac{2}{3}}$ for each $v\in V (G)$.
\end{enumerate}

Furthermore, one such partition can be constructed in $O(2n^3 {\rm log}_2(2n^3))$-time.
\end{LEM}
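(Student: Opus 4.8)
\emph{Proof plan.} We may assume the $2t$ vertices $x_1,y_1,\dots,x_t,y_t$ are distinct, so that $\{x_1,y_1\},\dots,\{x_t,y_t\}$ are $t$ pairwise disjoint $2$-element sets. The first move is to turn the whole vertex set into pairs: arbitrarily group the remaining $2n-2t$ vertices (if any) into $n-t$ further ``free'' pairs, obtaining a partition $V(G)=P_1\cup\cdots\cup P_n$ with $P_i=\{x_i,y_i\}$ for $i\le t$. Now I would choose $A$ and $B$ by assigning, independently for each $j\in[1,n]$ and uniformly at random, one endpoint of $P_j$ to $A$ and the other to $B$. Conditions (i) and (ii) then hold for \emph{every} outcome of the coin flips, so only (iii) requires work.

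Fix $v\in V(G)$ and set $X_v:=d_A(v)-d_B(v)$. The key observation is that, grouping contributions pair by pair, $X_v=\sum_{j=1}^{n}\xi_j (u_j)_v$, where $\xi_j\in\{+1,-1\}$ encodes the (random, independent) orientation of $P_j$ and $(u_j)_v\in\{-1,0,+1\}$ equals $0$ unless $P_j$ contains exactly one neighbour of $v$. Indeed, a pair disjoint from $N(v)$ contributes nothing, and a pair containing two neighbours of $v$ sends one neighbour to each side and so also contributes nothing. Hence $X_v$ is a sum of at most $d_G(v)\le 2n-1$ independent Rademacher variables with mean $0$, and Hoeffding's inequality gives
\[
\mathbb{P}\big[\,|X_v|> n^{2/3}\,\big]\ \le\ 2\exp\!\left(-\frac{n^{4/3}}{2\cdot 2n}\right)\ =\ 2\exp\!\left(-\tfrac14 n^{1/3}\right).
\]
A union bound over the $2n$ vertices shows that with probability at least $1-4n\exp(-\tfrac14 n^{1/3})$, which is positive once $n\ge n_0$, all vertices satisfy $|X_v|\le n^{2/3}$; any such outcome is a partition as required.

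For the constructive statement I would derandomize this by the method of conditional expectations applied to the pessimistic estimator $\Phi:=\sum_{v\in V(G)}\big(e^{sX_v}+e^{-sX_v}\big)$ with $s=\tfrac12 n^{-1/3}$. Using $\mathbb{E}[e^{\pm sR}]\le e^{s^2/2}$ for each Rademacher term, one gets $\mathbb{E}[\Phi]\le 4n\,e^{s^2 n}=4n\,e^{n^{1/3}/4}<e^{sn^{2/3}}=e^{n^{1/3}/2}$ for large $n$; and whenever some $|X_v|>n^{2/3}$ one has $\Phi>e^{sn^{2/3}}$. Fixing the orientations $\xi_1,\dots,\xi_n$ one at a time, always choosing the value that does not increase the current conditional expectation of $\Phi$, the final assignment has $\Phi\le\mathbb{E}[\Phi]<e^{sn^{2/3}}$, hence $|X_v|\le n^{2/3}$ for all $v$. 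Maintaining the per-vertex factors of $\Phi$ incrementally (the orientation of $P_j=\{a_j,b_j\}$ affects at most $d_G(a_j)+d_G(b_j)$ vertices) gives an implementation within the stated $O\big(2n^3\log_2(2n^3)\big)$ bound.

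The one genuinely delicate point is the very first step: forming \emph{free} pairs out of the non-prescribed vertices so that $|A|=|B|$ holds automatically. This is exactly what keeps the $\xi_j$'s independent; the alternative of colouring the free vertices independently and then conditioning on $n-t$ of them landing in $A$ would correlate all the $X_v$'s and spoil the clean Hoeffding estimate. After that reduction, everything is a standard concentration-plus-derandomization argument.
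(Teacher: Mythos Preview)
This lemma is not proved in the present paper; it is quoted from Shan~\cite{S2022} and used as a black box in Step~1 of the proof of Theorem~\ref{thm}. So there is no in-paper proof to compare against.

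That said, your argument is correct and is essentially the canonical one for statements of this shape. Extending the $t$ prescribed pairs to a full pairing of all $2n$ vertices and then orienting each pair by an independent fair coin is exactly the right move: it enforces (i) and (ii) deterministically while keeping the coin flips independent, so that $d_A(v)-d_B(v)$ is a genuine sum of independent centred $\{-1,0,+1\}$ variables and Hoeffding applies cleanly. The derandomization via conditional expectations on the exponential pessimistic estimator $\Phi$ is the standard Raghavan--Spencer scheme and works as you describe. Two small points you could tighten: the number of nonzero summands in $X_v$ is in fact at most $n$ (there are only $n$ pairs), which sharpens the exponent in your tail bound from $-\tfrac14 n^{1/3}$ to $-\tfrac12 n^{1/3}$, though your weaker estimate already suffices; and the running-time claim is a bit brisk, since comparing the two conditional expectations of $\Phi$ at each step requires enough bit-precision to resolve quantities of order $e^{\Theta(n^{1/3})}$. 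The stated $O(n^{3}\log n)$ budget is comfortably loose enough to absorb both the $O(n^{2})$ incremental updates you describe and this precision overhead, so no real gap remains.
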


Combinatorial design theory traces its origins to statistical theory of experimental design but also to recreational mathematics of the 19th century and to geometry. A \emph{balanced incomplete block design} (BIBD) with parameters $(\nu, b, r, k, \lambda)$
is an ordered pair $(\Omega, \mathcal{B})$ where $\Omega$ is a finite $\nu$-element set of elements or points, $\mathcal{B}=\{B_1,B_2,\ldots,B_{b}\}$ is a family of $k$-element subsets of $\Omega$, called \emph{blocks} such that every
point is contained in exactly $r$ blocks, every 2-subset of $\Omega$ is contained
in exactly $\lambda$ blocks, and $k<\nu$. It is well known that there exists a $(9,12, 4, 3, 1)$-BIBD. We will need
this in the proof of Theorem~\ref{thm}(i).

\section{Proof of Theorem~\ref{thm}}

We will prove Theorem~\ref{thm}(i) and (ii) together. The proof  requires different discussion in the initial and last steps,
but the rest parts are basically the same.

 We choose $n_0$ to be at least  the $n_0$ specified in Lemma~\ref{lem:partition} such that $1/n_0 \ll \ve \ll 1$.
Let $G$ be a $d$-regular graph on $n\ge n_0$ vertices, where $n$ is even.
We are done by Proposition~\ref{prop2} if $G=K_n$. Thus assume that $G$ is not $K_n$. Then we have $\chi'_{vd}(G)\geq \pi(G)\geq d+2$.  To prove the theorem, it suffices to show that  $\chi'_{vd}(G) \le d+2$ when $d\ge \frac{(1+\varepsilon)n}{2}$ and $\chi'_{sd}(G) \le d+2$ when $d\ge \frac{2n}{3}$
as $\chi'_{vd}(G) \le \chi'_{sd}(G)$.

 For   Theorem~\ref{thm}(i), we will add to $G$ the edges of a   2-regular graph $Q$ on $V(G)$
and precolor the new added edges from $Q$ (multiple edges may occur in the resulting multigraph $G^*$).  We  then extend the precoloring on the edges of $Q$ to an edge-$(d+2)$-coloring  $\varphi$ of $G^*$. By making the initial
edge coloring on edges from  $Q$ to be vertex-distinguishing,   the restriction of $\varphi$ on $G$ gives a vd-edge-$(d+2)$-coloring of $G$
as $G^*$ is $(d+2)$-regular.
For Theorem~\ref{thm}(ii), we will find in $G$ a spanning subgraph $Q$ with maximum degree at most 3 and precolor
its edges.   We  then extend the precoloring  of $Q$ to $G$ using $d$ colors.
As $G$ is $d$-regular, the sums of colors on edges incident
with all vertices are equal after the  extension.
In the last step,  we modify colors on some edges of $Q$ by using   two new colors.
By carefully designing the initial precoloring of $Q$'s edges and recoloring some of its edges using two new colors in the last step, we can ensure that the resulting coloring is sum-distinguishing.  We now provide details for
the construction of the colorings.

\begin{center}
    { \bf Initial Step: Dentition of $Q$ and its pre-edge-coloring $\varphi_0$}
\end{center}

Let $n=3q+r$, where  $q\in \mathbb{N}$ and $r\in[0,2]$.

For Theorem~\ref{thm}(i), we let $Q$ to be a 2-regular graph on $V(G)$ defined as follows:
\begin{itemize}
\item If $r=0$, then $Q$ is the vertex-disjoint union of $q$ copies of 3-cycles;
\item If $r=1$, then $Q$ is the vertex-disjoint union of $q-1$ copies of 3-cycles and one copy of a 4-cycle;
\item If $r=2$, then $Q$ is the vertex-disjoint union of $q-1$ copies of 3-cycles and one copy of a 5-cycle.
\end{itemize}
Let $G^{*}$ be the  multigraph obtained from $G$ by adding all edges of $E(Q)$. Then  $G^*$ is $(d+2)$-regular.

Let $c\geq 10$ be the smallest constant such that $\lfloor\frac{n}{4}\rfloor+c=3t$ for some $t\in \mathbb{N}$. We define $t$ pairwise nonintersecting sets as $T_i=\{3i-2,3i-1,3i\}$ for $i\in[1,t]$. For each $j\in[1,\lfloor\frac{t}{3}\rfloor]$, there exists a $(9,12,4,3,1)$-BIBD  on the set
\[T_{3j-2}\cup T_{3j-1}\cup T_{3j}=\{9j-8,9j-7,9j-6,9j-5,9j-4,9j-3,9j-2,9j-1,9j\}\]  as follows:
\begin{equation} \nonumber
\begin{array}{ll}
    B^1_{j}=\{9j-8,9j-7,9j-6\}, & B^2_{j}=\{9j-5,9j-4,9j-3\}, \\
    B^3_{j}=\{9j-2,9j-1,9j\}, & B^4_{j}=\{9j-8,9j-5,9j-2\}, \\
    B^5_{j}=\{9j-7,9j-4,9j-1\},& B^6_{j}=\{9j-6,9j-3,9j\},\\
    B^7_{j}=\{9j-8,9j-4,9j\},& B^8_{j}=\{9j-7,9j-3,9j-2\}, \\
    B^9_{j}=\{9j-6,9j-5,9j-1\},& B^{10}_{j}=\{9j-8,9j-3,9j-1\}, \\
    B^{11}_{j}=\{9j-7,9j-5,9j\}, & B^{12}_{j}=\{9j-6,9j-4,9j-2\}.
\end{array}
\end{equation}

Thus we created in total
\[\begin{aligned}12\times \left\lfloor\frac{t}{3}\right\rfloor &\geq 12\left(\frac{t}{3}-1\right)=4\left(\frac{1}{3}\left\lfloor\frac{n}{4}\right\rfloor+\frac{c}{3}\right)-12\geq \frac{4}{3}\left(\frac{n}{4}-1\right)+\frac{4c}{3}-12\\&\geq \frac{n}{3}+\frac{4c}{3}-\frac{40}{3}\geq \frac{n}{3}
\end{aligned}\]
3-sets.  We let $B_1, \ldots, B_{\lfloor\frac{n}{3}\rfloor}$ be $\lfloor\frac{n}{3}\rfloor$
of these 3-sets.

For each  $C_3^i$ of the $\lfloor\frac{n}{3}\rfloor$ 3-cycles of $Q$, where $i\in [1,\lfloor\frac{n}{3}\rfloor]$, we edge color its three edges by using the three elements from $B_i$.  If $r=1$, we use additional 4 new colors from $[1,d+2]\setminus (\bigcup_{i=1}^{\lfloor\frac{n}{3}\rfloor}B_i)$ to edge color the four edges of the 4-cycle of $Q$ such that each edge of the 4-cycle is assigned with a different color. If $r=2$, we use additional 5 new colors from $[1,d+2]\setminus (\bigcup_{i=1}^{\lfloor\frac{n}{3}\rfloor}B_i)$ to edge color the five edges of the 5-cycle of $Q$ such that each edge of the 5-cycle is assigned with a different color.
Denote by $\varphi_0$ this initial edge coloring of $Q$. By the definition of BIBD, $\varphi_0$ is a vertex-distinguishing edge coloring of $Q$. Denote by $C_1$ the set of colors used on edges of $Q$.

For Theorem~\ref{thm}(ii), we first define a set $C_0$ as follows.
As $d$ is an integer,  $d\ge \frac{2n}{3}$ implies that $d\ge \lceil \frac{2n}{3} \rceil =2q+r$. Thus $d+2 \ge 2q+r+2$. Let
\[C_0=\left\{\begin{array}{ll}
 \{2,2q+2\} &\text{if   $q$ is odd},\\
\{2,2q \} &\text{if $r\in[0,1]$ and $q$ is even}, \\
   \{2,2q+4\} &\text{if  $r=2$ and $q$ is even}.
           \end{array}\right.
\]

\begin{claim} The graph $G$ has a spanning subgraph $Q$ such that $Q$ is defined as follows:

\begin{itemize}
\item  If $r=0$, then $Q$ is the vertex-disjoint union of $q$ copies of $K_{1,2}$;

\item  If $r=1$, then $Q$ is the vertex-disjoint union of $q-1$ copies of $K_{1,2}$ and one copy of $K_{1,3}$;
\item If $r=2$, then $Q$ is the vertex-disjoint union of $q-1$ copies of $K_{1,2}$ and one copy of  the graph obtained from $K_{1,3}$ by subdividing exactly one edge.
\end{itemize}
\end{claim}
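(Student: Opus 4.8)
The plan is to build $Q$ from a triangle factor of $G$ --- or of $G$ minus one or two cleverly chosen vertices --- and then, in each triangle, simply discard the one edge that is not needed. The only external input required is the following consequence of Theorem~\ref{thm-equitable} (Hajnal--Szemer\'edi), applied to complements: \emph{if $H$ is a graph on $3\ell$ vertices with $\delta(H)\ge 2\ell$, then $H$ has a triangle factor.} Indeed $\Delta(\overline H)=(3\ell-1)-\delta(H)\le \ell-1$, so by Theorem~\ref{thm-equitable} the complement $\overline H$ has an equitable $\ell$-colouring; each of its $\ell$ colour classes then has exactly three vertices and is independent in $\overline H$, hence spans a triangle in $H$, and these $\ell$ triangles form the desired factor. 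Note also that $K_{1,2}\subseteq K_3$.

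First, for every $r\in\{0,1,2\}$ the assumption that $d$ is an integer with $d\ge 2n/3=2q+2r/3$ forces $d\ge 2q+r$. For $r=0$ this already gives $\delta(G)=d\ge 2q$, so the reduction above (with $\ell=q$) produces a triangle factor $T_1,\dots,T_q$ of $G$, and keeping two of the three edges of each $T_i$ in $Q$ yields the required $q$ disjoint copies of $K_{1,2}$. For $r\in\{1,2\}$ I would fix a vertex $w$ of $G$, and additionally a neighbour $w'$ of $w$ when $r=2$; set $W=\{w\}$ if $r=1$ and $W=\{w,w'\}$ if $r=2$. Then $G-W$ has exactly $3q$ vertices with $\delta(G-W)\ge d-|W|\ge 2q$, so $G-W$ has a triangle factor $T_1,\dots,T_q$. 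Since $w$ has at least $d-|W|\ge 2q\ge 1$ neighbours inside $V(G)\setminus W=\bigcup_i V(T_i)$, some $T_i$ --- say $T_q=\{a,b,c\}$ with $a$ a neighbour of $w$ --- can play a special role: I take two edges of each of $T_1,\dots,T_{q-1}$ into $Q$ (giving $q-1$ copies of $K_{1,2}$), and on the remaining vertices I put into $Q$ the edges $ab,ac,aw$ if $r=1$ --- a copy of $K_{1,3}$ centred at $a$ on $\{a,b,c,w\}$ --- or the edges $ab,ac,aw,ww'$ if $r=2$, which form on $\{a,b,c,w,w'\}$ exactly the graph obtained from $K_{1,3}$ by subdividing one edge (centre $a$, the subdivided branch being $a$--$w$--$w'$). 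Every edge used lies in $G$ because each $T_i$ is a triangle, $aw\in E(G)$ by the choice of $a$, and $ww'\in E(G)$ by the choice of $w'$.

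I expect essentially no conceptual obstacle here; the only point needing care is the degree bookkeeping, and it is tight. For $r=1$ one has $d\ge 2q+1$, which yields $\delta(G-W)\ge 2q$ exactly, and an equitable $q$-colouring of a graph on $3q$ vertices of maximum degree $q-1$ is precisely what Theorem~\ref{thm-equitable} delivers with nothing to spare; so the threshold $d\ge 2n/3$ is exactly what makes this argument run. The remaining thing to verify --- that the edge set selected on the last block is literally $K_{1,3}$ (resp.\ its one-edge subdivision) and not some larger graph --- is immediate, since $Q$ need only be a subgraph of $G$, not an induced one.
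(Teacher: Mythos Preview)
Your proof is correct. Both you and the paper rest on the same key tool---Hajnal--Szemer\'edi applied to $\overline{G}$ to produce small cliques in $G$---but you handle the residue $r$ differently. The paper applies Theorem~\ref{thm-equitable} directly to all of $\overline{G}$: for $r\in\{0,1\}$ it takes an equitable $q$-colouring (yielding $q-r$ triangles and $r$ copies of $K_4$, from which $K_{1,3}$ is extracted), and for $r=2$ it takes an equitable $(q+1)$-colouring (yielding $q$ triangles and one $K_2$, which is then attached to a triangle via a single extra edge, just as you do). Your version instead deletes $r$ well-chosen vertices up front, finds a triangle factor in the resulting $3q$-vertex graph, and reattaches. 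The paper's handling of $r=1$ is marginally slicker---the $K_{1,3}$ falls out of a $K_4$ with no reattachment step---while for $r=2$ the two arguments are essentially mirror images of each other. Neither route gains anything substantive over the other; your degree bookkeeping is tight in exactly the same place the paper's is.
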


\begin{proof}  As $d$ is an integer,  $d\ge \frac{2n}{3}$ implies that $d\ge \lceil \frac{2n}{3} \rceil =2q+r$.
Thus the complement  graph  $\overline{G}$  of $G$ is $(n-1-d)$-regular with $n-1-d\leq n-1-2q-r = q-1$.

If $r\in [0,1]$,   applying  Theorem~\ref{thm-equitable} with $k=q$,  we get  an equitable $q$-coloring $\varphi$ of $\overline{G}$.
  Let $V_1,V_2,\ldots,V_{q}$  be the color classes of $\varphi$ in $\overline{G}$. Then  exactly $q-r$ of the $V_i$'s have size 3, while the remaining $r$ have size 4.  Thus  by deleting suitable edges from each component of $\bigcup_{i=1}^q G[V_i]$, we get a
  desired spanning subgraph $Q$ for $G$.

  If $r=2$,   applying  Theorem~\ref{thm-equitable} with $k=q+1$,  we get  an equitable $(q+1)$-coloring $\varphi$ of $\overline{G}$.
  Let $V_1,V_2,\ldots,V_{q+1}$  be the color classes of $\varphi$ in $\overline{G}$. Then  exactly $q$ of the $V_i$'s   have  size 3 and
  the rest one  has size $2$.
  Thus  exactly  $q$ of the components of $\bigcup_{i=1}^{q+1} G[V_i]$ are triangles while one is an edge.
  Let $uv$ be the component  of $\bigcup_{i=1}^{q+1} G[V_i]$ that is an edge. As $d_G(u)\ge \frac{2n}{3}$, $u$
  is adjacent in $G$ to a vertex from a triangle component of $\bigcup_{i=1}^{q+1} G[V_i]$.
  Thus $G$ contains a spanning subgraph  $Q^*$ with $q$ components such that  $q-1$ of the components are triangles
  and one is the graph obtained from a triangle and  a vertex-disjoint $K_2$ by adding exactly one edge between them.
  Let $Q$ be obtained from $Q^*$ by deleting exactly one edge from the cycle of each of its components. Then $Q$ is the desired
  spanning subgraph of $G$.
\end{proof}

We   label the vertices of $Q$ for easier reference. In $Q$,  the $q$ vertices
  that are adjacent  to vertices of degree 1 are denoted by $v_1, \ldots, v_q$, respectively.
  In particular, if $r\in [1,2]$, we let $v_q$ be the vertex that is of degree 3 in $Q$.
  For each $i\in [1,q]$,   two vertices of degree 1 that are adjacent to $v_i$ are labeled by $u_i$ and $x_i$, respectively.
  If $r=1$, then the last  so far unlabeled vertex adjacent to $v_q$ is labeled by $y_q$.
If $r=2$, then the so far unlabeled vertex that is adjacent to $v_q$ is labeled by $y_q$ and then the vertex of degree 1 that is adjacent to $y_q$
  is labeled by $z_q$.  See Figure~\ref{figure2} for an illustration of the labeling of the vertices of $Q$.

Let $C_1=\{2i+1 \mid i\in[0,q]\}$ if $r\in [0,1]$, $C_1=\{2i+1 \mid i\in[0,q]\}\cup \{2q-2\}$ if $r=2$ and $q$ is odd, and
$C_1=\{2i+1 \mid i\in[0,q]\}\cup \{2q\}$ if $r=2$ and $q$ is even.
We now give a pre-edge coloring $\varphi_{0}$ of $Q$ using colors from   $C_1$ as follows: For each $i\in[1,q]$, let
\begin{eqnarray*}
    \varphi_{0}(v_iu_i)&=&2i-1, \\
    \varphi_{0}(v_ix_i)&=&2i+1, \\
    \varphi_0(v_qy_q)&=&1 \quad \text{if $r=1$,}  \\
    \varphi_0(v_qy_q)&=&2q-2 \quad \text{and} \quad   \varphi_0(y_qz_q)=2q+1 \quad \text{if $r=2$ and $q$ is odd}, \\
    \varphi_0(v_qy_q)&=&2q  \quad \text{and} \quad   \varphi_0(y_qz_q)=2q+1 \quad \text{if $r=2$ and $q$ is even}.
\end{eqnarray*}
See Figure~\ref{figure2} for an illustration of the coloring $\varphi_0$.

\begin{figure}[!htb]
	\begin{center}
\begin{tikzpicture}

{\tikzstyle{every node}=[draw,circle,fill=black,minimum size=4pt,
                            inner sep=0pt]
 \draw (0,0) node (v1)  {}
        -- ++(60:2cm) node (u1)  {}
        -- ++(300:2cm) node (x1)   {}
        ;
         \draw (3,0) node (v2)  {}
        -- ++(60:2cm) node (u2)  {}
        -- ++(300:2cm) node (x2)   {};
          \draw (7,0) node (v_q-1)  {}
        -- ++(60:2cm) node (u_q-1)  {}
        -- ++(300:2cm) node (x_q-1)   {};
                  \draw (10,0) node (v_q)  {}
        -- ++(60:2cm) node (u_q)  {}
        -- ++(300:2cm) node (x_q)   {};
   }
   \node at (0,-0.3) {$u_1$};\node at (1,2) {$v_1$};\node at (2,-0.3) {$x_1$};
   \node at (0.1,0.7) {$1$}; \node at (1.8,0.7) {$3$};
   \node at (3,-0.3) {$u_2$};\node at (4,2) {$v_2$};\node at (5,-0.3) {$x_2$};
   \node at (3.1,0.7) {$3$}; \node at (4.8,0.7) {$5$};
   \node at (7,-0.3) {$u_{q-1}$};\node at (8,2) {$v_{q-1}$};\node at (9,-0.3) {$x_{q-1}$};
   \node[label=below:\rotatebox{60}{$2q-3$}] at (7.3,1.8) {};
   \node[label=below:\rotatebox{-60}{$2q-1$}] at (8.7,1.8) {};
   \node at (10,-0.3) {$u_{q}$};\node at (11,2) {$v_{q}$};\node at (12,-0.3) {$x_{q}$};
   \node[label=below:\rotatebox{60}{$2q-1$}] at (10.3,1.8) {};
   \node at (12,1) {$2q+1$};
\node at (6.2,0.8) {$\ldots$};

\node at (6,-1) {(a): $r=0$};

\begin{scope}[shift={(0,-4)}]

{\tikzstyle{every node}=[draw,circle,fill=black,minimum size=4pt,
                            inner sep=0pt]
 \draw (0,0) node (u1)  {}
        -- ++(60:2cm) node (v1)  {}
        -- ++(300:2cm) node (x1)   {}
        ;
         \draw (3,0) node (u2)  {}
        -- ++(60:2cm) node (v2)  {}
        -- ++(300:2cm) node (x2)   {};
          \draw (7,0) node (u_q-1)  {}
        -- ++(60:2cm) node (v_q-1)  {}
        -- ++(300:2cm) node (x_q-1)   {};
                  \draw (10,0) node (u_q)  {}
        -- ++(60:2cm) node (v_q)  {}
        -- ++(300:2cm) node (x_q)   {};
        \draw (v_q)-- ++(360:2cm) node (y_q)   {};
   }

   \node at (0,-0.3) {$u_1$};\node at (1,2) {$v_1$};\node at (2,-0.3) {$x_1$};
   \node at (0.1,0.7) {$1$}; \node at (1.8,0.7) {$3$};
   \node at (3,-0.3) {$u_2$};\node at (4,2) {$v_2$};\node at (5,-0.3) {$x_2$};
   \node at (3.1,0.7) {$3$}; \node at (4.8,0.7) {$5$};
   \node at (7,-0.3) {$u_{q-1}$};\node at (8,2) {$v_{q-1}$};\node at (9,-0.3) {$x_{q-1}$};
    \node[label=below:\rotatebox{60}{$2q-3$}] at (7.3,1.8) {};
   \node[label=below:\rotatebox{-60}{$2q-1$}] at (8.7,1.8) {};
     \node at (10,-0.3) {$u_{q}$};\node at (11,2) {$v_{q}$};\node at (12,-0.3) {$x_{q}$};\node at (13,2) {$y_{q}$};
     \node[label=below:\rotatebox{60}{$2q-1$}] at (10.3,1.8) {};
   \node at (12.2,0.7) {$2q+1$}; \node at (12,2) {$1$};
\node at (6.2,0.8) {$\ldots$};

\node at (6,-1) {(b): $r=1$};
\end{scope}

 \begin{scope}[shift={(0,-8)}]

{\tikzstyle{every node}=[draw,circle,fill=black,minimum size=4pt,
                            inner sep=0pt]
 \draw (0,0) node (u1)  {}
        -- ++(60:2cm) node (v1)  {}
        -- ++(300:2cm) node (x1)   {}
        ;
         \draw (3,0) node (u2)  {}
        -- ++(60:2cm) node (v2)  {}
        -- ++(300:2cm) node (x2)   {};
          \draw (7,0) node (u_q-1)  {}
        -- ++(60:2cm) node (v_q-1)  {}
        -- ++(300:2cm) node (x_q-1)   {};
                  \draw (10,0) node (u_q)  {}
        -- ++(60:2cm) node (v_q)  {}
        -- ++(300:2cm) node (x_q)   {};
        \draw (v_q)-- ++(360:2cm) node (y_q)   {}
        --++(300:2cm) node (z_q)   {};
   }

 %\node at (11,-1) {$F$};
   \node at (0,-0.3) {$u_1$};\node at (1,2) {$v_1$};\node at (2,-0.3) {$x_1$};
   \node at (0.1,0.7) {$1$}; \node at (1.8,0.7) {$3$};
   \node at (3,-0.3) {$u_2$};\node at (4,2) {$v_2$};\node at (5,-0.3) {$x_2$};
   \node at (3.1,0.7) {$3$}; \node at (4.8,0.7) {$5$};
   \node at (7,-0.3) {$u_{q-1}$};\node at (8,2) {$v_{q-1}$};\node at (9,-0.3) {$x_{q-1}$};
   \node[label=below:\rotatebox{60}{$2q-3$}] at (7.3,1.8) {};
   \node[label=below:\rotatebox{-60}{$2q-1$}] at (8.7,1.8) {};
     \node at (10,-0.3) {$u_{q}$};\node at (11,2) {$v_{q}$};\node at  (12,-0.3) {$x_{q}$};\node at  (13,2){$y_{q}$};\node at (14,-0.3) {$z_{q}$};
   \node[label=below:\rotatebox{60}{$2q-1$}] at (10.3,1.8) {};
   \node at (12.1,0.9) {$2q+1$}; \node at (12,2) {$q^*$};\node at (14.1,0.9) {$2q+1$};
\node at (6.2,0.8) {$\ldots$};

\node at (6,-1) {(c): $r=2$, where $q^*=2q-2$ when $q$ is odd and $q^*=2q$ when $q$ is even};
 \end{scope}
\end{tikzpicture}
\end{center}
\caption{The pre-coloring $\varphi_0$ of  $Q$ for Theorem~\ref{thm}(ii)}
\label{figure2}
\end{figure}
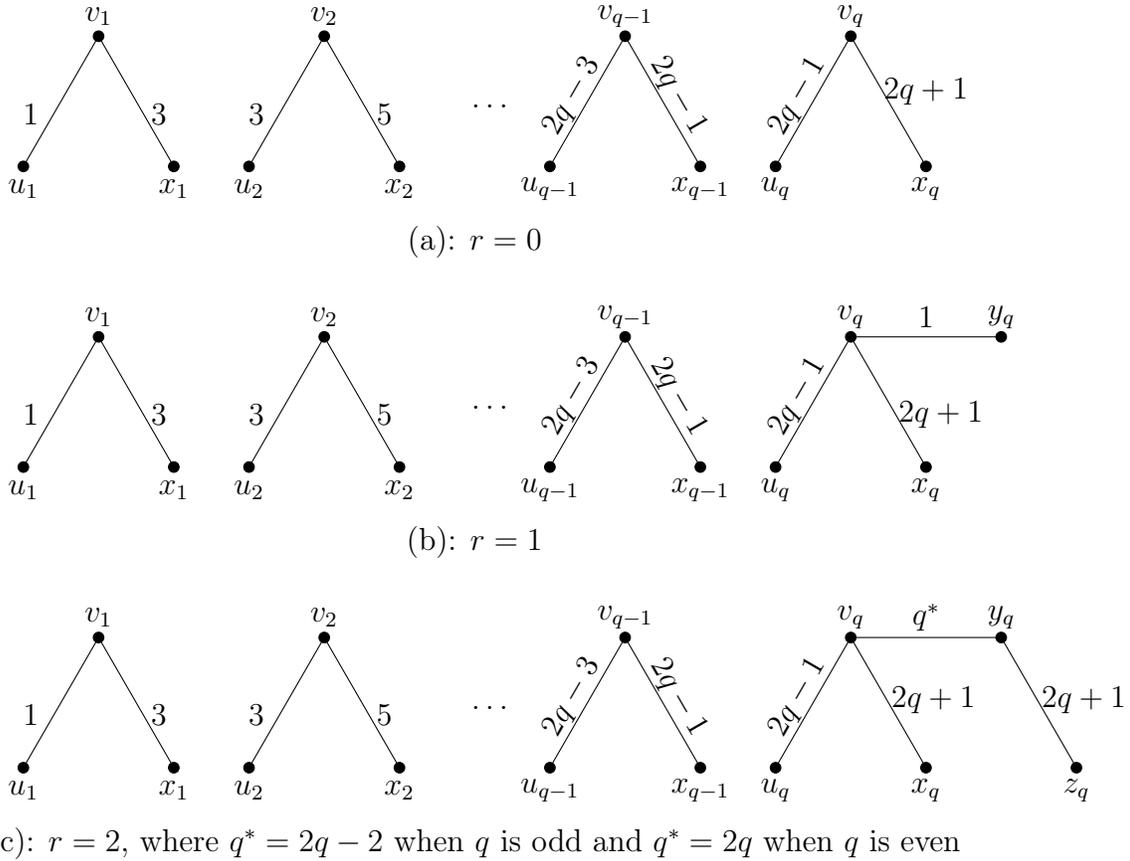

Let $m=\frac{n}{2}$.  For Theorem~\ref{thm}(i), we let $d^{*}=d+2$ and $C_0=\emptyset$;
and for Theorem~\ref{thm}(ii), we let $d^{*}=d$ and $G^{*}=G$.
We will construct an edge-$d^{*}$-coloring of $G^{*}$   in six steps. In the first five steps, we extend the precoloring $\varphi_0$ of $Q$ to $G^{*}$ using $d^{*}$ colors from $[1,d+2]\setminus C_0$. As $G^{*}$ is $d^{*}$-regular, the set of colors on edges incident
with all vertices are equal at this stage.
In the last step, for Theorem~\ref{thm}(i), we delete edges of $Q$ from $G^{*}$; and for Theorem~\ref{thm}(ii),
we  modify colors on some edges of $Q$ by using the two colors from $C_0$.
%By carefully designing the initial precoloring of $Q$'s edges and recoloring some of its edges using two new colors in the last step, we can ensure that the resulting coloring is sum-distinguishing.
We   now provide  the detailed coloring process for the remainder of this proof, and we   use $\frac{1}{2}(1+\ve)n$
as a lower bound on $d$.

\begin{center}
    { \bf Step 1: Partition $V(G^{*})$}
\end{center}

Applying Lemma~\ref{lem:partition} with $N=\emptyset$, we partition $V(G^{*})$
into sets $A$ and $B$ such that
\begin{enumerate}[(P1)]
    \item $|A|=|B|$ and
    \item $|d_A(v)-d_B(v)| \leq m^{\frac{2}{3}}$ for each $v\in V (G^{*})$.
\end{enumerate}

Let
$$
G^{*}_A=G^{*}[A], \quad G^{*}_B=G^{*}[B], \quad G^{*}_{A,B}=G^{*}_A\cup G^{*}_B\cup Q, \quad \text{and} \quad
H=G^{*}[E_{G^{*}}(A,B)].
$$
Then for any vertex $v\in V(G^{*})$, we have that
\begin{eqnarray}
    d_A(v)&=&\frac{1}{2}((d_A(v)+d_{B}(v))+(d_A(v)-d_{B}(v)))\geq \frac{1}{2}(d^{*}-m^{\frac{2}{3}}), \nonumber\\
    \delta(G^{*}_{A,B})  &\geq& \frac{1}{2}(d^{*}-m^{\frac{2}{3}}). \label{eq2}
\end{eqnarray}
Analogously, since $\Delta(Q) \le 3$,  we have
\begin{eqnarray}
   \Delta(G^{*}_{A,B})&\leq& \frac{1}{2}(d^{*}+m^{\frac{2}{3}})+3.\label{eq3}
\end{eqnarray}

Since $d_{G}(v)=d^{*}$ for any $v\in V(G^{*})$ and $|A|=|B|$, we have  $\sum_{v\in A}d_{G^{*}}(v)=\sum_{v\in B}d_{G^{*}}(v)$. As a consequence,
\begin{equation}\label{eq4}|E(G^{*}_A)|=\frac{1}{2}\left(\sum_{v\in A}d_{G^{*}}(v)-|E_{G^{*}}(A,B)|\right)=|E(G^{*}_B)|.
\end{equation}

\begin{center}
  {\bf Step 2:   Extend $\varphi_0$ to an edge coloring $\varphi$ of  $G^{*}_{A,B}$}
\end{center}

Note that $\Delta(Q) \le 3$, and under $\varphi_0$, each color from $C_1$ is used on at most four edges of $Q$.
Thus by Lemma~\ref{lem2.2}, we can extend $\varphi_0$ to an edge coloring  $\varphi$ of $G^{*}_{A,B}$
using at most $\max\{|C_1|,\Delta(G^{*}_{A,B})+4\times 3\times 4-1=\Delta(G^{*}_{A,B})+47\}$ colors.  As $\Delta(G^{*}_{A,B}) \le \frac{1}{2}(d^{*}+m^{\frac{2}{3}})+3$
by~\eqref{eq3} and $|C_1|< \frac{1}{2}(d^{*}+m^{\frac{2}{3}})+3$, letting $k=\lceil \frac{1}{2}(d^{*}+m^{\frac{2}{3}})\rceil +50$, we extend $\varphi_0$
to an edge coloring  $\varphi$ of $G^{*}_{A,B}$ using $k$ colors from $[1,d+2]\setminus C_0$. Denote by
$C_2$ the set of the $k$ colors used by $\varphi$. As $\varphi$ is an extension of $\varphi_0$, we have
$C_1\subseteq C_2$.
Furthermore, by Lemma~\ref{PartialEC}, the edge coloring $\varphi$  of $G^{*}_{A,B}$ can be chosen such that \begin{equation}\label{eq7}||\overline{\varphi}^{-1}(\alpha)|-|\overline{\varphi}^{-1}(\beta)||\leq 17
\end{equation}
for any $\alpha,\beta\in C_2$.

As $|\overline{\varphi}(v)|=k-d_{G_{A,B}}(v) \le \frac{1}{2}(d^{*}+m^{\frac{2}{3}})+51-\frac{1}{2}(d^{*}-m^{\frac{2}{3}})\le m^{\frac{2}{3}}+51$
by the definition of $k$ and~\eqref{eq2} for each $v\in V(G^{*})$, we know that
 the average
number of vertices missing any particular color is at most
\begin{equation}
    \frac{2m(m^{2/3}+51)}{k}<\frac{2m(m^{2/3}+51)}{m/2}<5m^{2/3}-17.\label{eq8}
\end{equation}
By~\eqref{eq7}, we get
\begin{equation}\label{eq9} |\overline{\varphi}^{-1}(\alpha)|<5m^{2/3}\ \mbox{for\ any}\ \alpha\in C_2.
\end{equation}

\begin{center}
     {\bf Step 3: Extend  the $k$ color classes of $\varphi$ into $k$ perfect matchings}
\end{center}

During this step, by swapping  colors along alternating paths (paths  with edges alternating between   uncolored edges of $H$ and  edges with a given color), we will   increase the size of the $k$ color classes obtained in Step 2 until each color class is a perfect matching of $G^{*}$. The edge coloring $\varphi_0$ of $Q$ is preserved in this process.
However, we will uncolor some of the edges
of $G^{*}_A$ and $G^{*}_B$. Let $R_A$, $R_B$ be the subgraphs induced by all uncolored edges in $G^{*}_A$, $G^{*}_B$, respectively. These graphs will both initially be empty. As we proceed this step, one or two edges will be added to each of $R_A$ and $R_B$ each time we swap colors on an alternating path. We  will ensure that the following conditions are satisfied after the completion of Step 3.
\begin{enumerate}[(C1)]
    \item $|E(R_A)| = |E(R_B)| < 4m^{5/3}$;
    \item $\Delta(R_A), \Delta(R_B) < m^{5/6}$;
    \item  Each vertex of $G^{*}$ is incident with fewer than $2m^{5/6}$ colored edges of $H$.
\end{enumerate}

At any point during our process, we say that an edge $e = uv$ is \emph{good} if $e\not\in E(R_A)\cup E(R_B)$, the degree of $u$ and $v$ in both $R_A$ and $R_B$ is less than $m^{5/6}-1$. In particular, a good edge can be added to $R_A$ or $R_B$ without violating condition (C2).

By Lemma~\ref{Parity}, $|\overline{\varphi}^{-1}(\alpha)|$ is even for every color $\alpha\in C_2$.  Since $(A, B)$ is a partition of $V(G^{*})$, the quantity $|\overline{\varphi}^{-1}_A(\alpha)|-|\overline{\varphi}^{-1}_B(\alpha)|$ is also even. We can pair up
as many vertices as possible from $\overline{\varphi}^{-1}_A(\alpha)$, $\overline{\varphi}^{-1}_B(\alpha)$ and then pair up the remaining unpaired vertices from $\overline{\varphi}^{-1}_A(\alpha)$ or $\overline{\varphi}^{-1}_B(\alpha)$. We call each of these pairs a \emph{missing-common-color} pair or \emph{MCC}-pair with respect to the color $\alpha$. In fact, given any MCC-pair $(a,b)$ with respect
to some color $\alpha$, we will show that the vertices $a, b$ are connected by some alternating path $P$ (with at most 7 edges),
the path starts and also ends with an uncolored edge of $H$  and alternates between good edges (colored $\alpha$) and uncolored edges. Given such a path $P$, we will swap along $P$ by giving all the uncolored edges color $\alpha$, and uncoloring all the colored edges. After this, both $a$ and $b$ will be incident with edges colored $\alpha$, and at most three good edges will be added to $R_A\cup R_B$. Given the alternating nature of the path $P$, at most two good edges will be added to any one of $R_A, R_B$. The main difficulty in this Step 3 (and perhaps in this proof) is to show that such path $P$ exists. Before we do this, let us show that conditions (C1)-(C3) can be guaranteed at the end   of Step 3. In fact, we will only ever add good edges to $R_A$ and $R_B$, so condition (C2) will hold automatically. It remains then only to check (C1) and (C3).

Let us first consider Condition (C1). As  $\sum_{v\in V(G^{*})}|\overline{\varphi}(v)|<3m^{5/3}$ by~\eqref{eq8},
there are less than $1.5m^{5/3} < 2m^{5/3}$ MCC-pairs.
For each MCC-pair $(a, b)$ with $a, b\in V (G^{*})$, at most two edges will be added to each
of $R_A$ and $R_B$ when we exchange an alternating path connecting  $a$ and $b$. Thus there will always be less than $4m^{5/3}$ uncolored edges in each of $R_A$ and $R_B$. At the completion of Step 3, each of the $k$ color classes is a perfect matching of $G$ so both of $G^{*}_A$ and $G^{*}_B$ have the same number of colored edges. Since $|E(G^{*}_A)| = |E(G^{*}_B)|$ by~\eqref{eq4}, we have $|E(R_A)| = |E(R_B)|$.  Thus Condition (C1) will be satisfied at the end of Step 3.

We now show that Condition (C3) will also be satisfied at the end of Step 3. In the
process of Step 3, the number of newly colored edges of $H$ that are incident with a
vertex $u\in V (G^{*})$ will equal the number of our chosen alternating paths containing $u$. The number of such alternating paths of which $u$ is the first vertex will equal the number of colors missing from $u$ at the end of Step 2, so will be less than $|\overline{\varphi}(u)|<m^{2/3}+51$. The number of alternating paths in which
$u$ is not the first vertex will equal the degree of $u$ in $R_A\cup R_B$, and so will be less than $m^{5/6}$. As the edges of $Q$ were colored  before Step 1 and $\Delta(Q)\le 3$, the number of colored edges of $H$ that are incident with $u$ will be less than $|\overline{\varphi}(u)| + m^{5/6} + 3 < 2m^{5/6}$. Thus Condition (C3) will be satisfied at the end of Step 3.

Let $\alpha\in C_2$. We   describe how to find an alternating path  for an MCC-pair  with respect to $\alpha$.
Let $V_\alpha$ be the set of vertices incident in $G$ with an edge from  $Q$ that is colored by $\alpha$.
 We will   avoid  using vertices from $V_\alpha$ in all of the alternating paths connecting our MCC-pairs. For each vertex in an MCC-pair, we will see how to construct a short alternating path from it that avoids $V_{\alpha}$.

Let $v\in V(G^{*})$. Suppose that $v\in S\in \{A, B\}$, and let $T=\{A,B\}\setminus \{S\}$. Define $N_1(v)$ to be the set of all vertices in $T\setminus V_\alpha$, that are joined to $v$ by an uncolored edge, and are incident to a good edge colored $\alpha$. Let $N_2(v)$ be the set of vertices in $T\setminus V_\alpha$ that are
joined to a vertex of $N_1(v)$ by a good edge of color $\alpha$. Note that we have $|N_1(v)| = |N_2(v)|$,
but some vertices may be in $N_1(v)\cap N_2(v)$.

There are fewer than $4m^{5/3}$ edges in $R_B$ (by (C1)), so there are fewer than $8m^{5/6}$ vertices of degree at least $m^{5/6}$ in $R_B$. Each non-good edge is incident with one or two vertices of $R_B$ through the color $\alpha$. Thus the number of vertices in $B$ incident with a non-good edge colored by $\alpha$ is less than $8m^{5/6}$. In addition, by~\eqref{eq9} there are fewer than $5m^{2/3}$ vertices in $B$ that are missing color $\alpha$. As $\alpha$ is used on at most four edges of $Q$ under $\varphi_0$, we have $|V_\alpha| \le 8$.
So the number of vertices in $B$ that are not incident with a  good edge colored $\alpha$ or that are contained in $V_\alpha$
and incident with an edge colored $\alpha$ is
less than
\begin{equation}\label{eq5} 8m^{5/6} + 5m^{2/3}+8 <9m^{5/6}.
\end{equation}
By symmetry, the number of vertices in $A$ that are not incident with a  good edge colored $\alpha$ or that are contained in $V_\alpha$
and incident with an edge colored $\alpha$ is
less than $9m^{5/6}$. Therefore for any $v\in V (G^{*})$, by Condition (C3) and the fact that $\Delta(Q)\le 3$, we have
\begin{equation}\label{eq6}\begin{aligned}|N_1(v)| & \geq \frac{1}{2}(d^{*}-m^{2/3})-3- 2m^{5/6}-9m^{5/6}\\
&\geq \frac{1}{2}\left(\frac{(1+\varepsilon)}{2}\cdot 2m-m^{2/3}\right)-11m^{5/6}-2\\
&> \frac{m}{2}+\frac{\varepsilon m}{3}.
\end{aligned}
\end{equation}

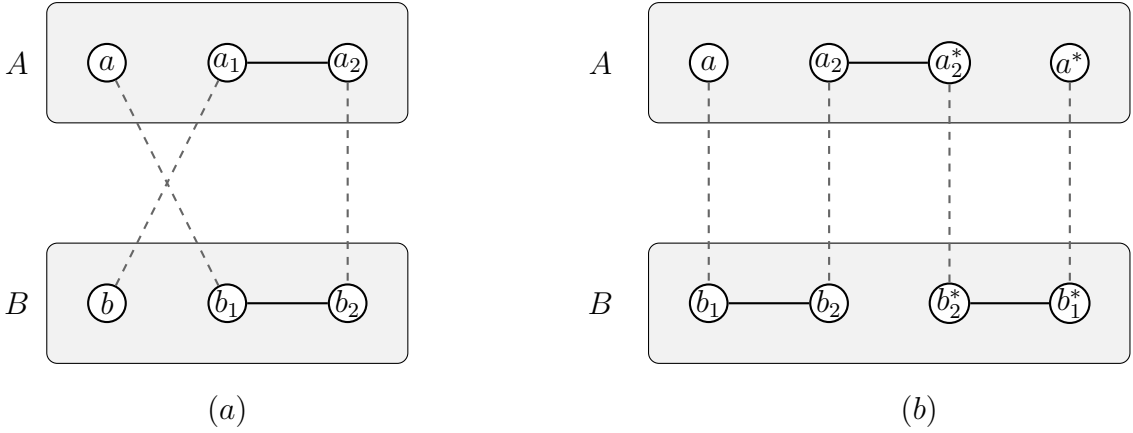
\begin{figure}[!htb]
	\begin{center}
		
		\begin{tikzpicture}[scale=0.8]

			\begin{scope}[shift={(0,0)}]
				\draw[rounded corners, fill=white!90!gray] (0, 0) rectangle (6, 2) {};
				
				\draw[rounded corners, fill=white!90!gray] (0, -4) rectangle (6, -2) {};
				
				{\tikzstyle{every node}=[draw ,circle,fill=white, minimum size=0.5cm,
					inner sep=0pt]
					\draw[black,thick](1,1) node (a)  {$a$};
					\draw[black,thick](3,1) node (a1)  {$a_1$};
					\draw[black,thick](5,1) node (a2)  {$a_2$};
					\draw[black,thick](1,-3) node (b)  {$b$};
					\draw[black,thick](3,-3) node (b1)  {$b_1$};
					\draw[black,thick](5,-3) node (b2)  {$b_2$};
				}

				\path[draw,thick,black!60!white,dashed]
				(a) edge node[name=la,pos=0.7, above] {\color{blue} } (b1)
				(a2) edge node[name=la,pos=0.7, above] {\color{blue} } (b2)
				(b) edge node[name=la,pos=0.6,above] {\color{blue}  } (a1)
				;
				
				\path[draw,thick,black]
				(a1) edge node[name=la,pos=0.7, above] {\color{blue} } (a2)
				(b1) edge node[name=la,pos=0.7, above] {\color{blue} } (b2)
				;
				
				\node at (-0.5,1) {$A$};
				\node at (-0.5,-3) {$B$};
				\node at (3,-4.8) {$(a)$};

			\end{scope}

			\begin{scope}[shift={(2,0)}]
				\draw[rounded corners, fill=white!90!gray] (8, 0) rectangle (16, 2) {};
				
				\draw[rounded corners, fill=white!90!gray] (8, -4) rectangle (16, -2) {};
				
				{\tikzstyle{every node}=[draw ,circle,fill=white, minimum size=0.5cm,
					inner sep=0pt]
					\draw[black,thick](9,1) node (c)  {$a$};
					\draw[black,thick](11,1) node (c1)  {$a_{2}$};
					\draw[black,thick](13,1) node (c2)  {$a^*_{2}$};
					\draw[black,thick](15,1) node (c3)  {$a^*$};
					%\draw[black,thick](12,1) node (c2)  {$a_{12}$};
					%\draw[black,thick](13.5,1) node (c3)  {$a_3$};
					%\draw[black,thick](15,1) node (c4)  {$a_2$};
					\draw[black,thick](9,-3) node (d)  {$b_{1}$};
					\draw[black,thick](11,-3) node (d1)  {$b_{2}$};
					\draw[black,thick](13,-3) node (d2)  {$b^*_{2}$};
					\draw[black,thick](15,-3) node (d3)  {$b^*_{1}$};
					%\draw[black,thick](12,-3) node (d2)  {$b_2$};
					%\draw[black,thick](13.5,-3) node (d3)  {$b_{22}$};
					%\draw[black,thick](15,-3) node (d4)  {$b_{21}$};
					
				}
				\path[draw,thick,black!60!white,dashed]
				(c) edge node[name=la,pos=0.7, above] {\color{blue} } (d)
				(c1) edge node[name=la,pos=0.7, above] {\color{blue} } (d1)	
				(c3) edge node[name=la,pos=0.7, above] {\color{blue} } (d3)	
				(c2) edge node[name=la,pos=0.7, above] {\color{blue} } (d2)	
				;
				
				\path[draw,thick,black]
				(d) edge node[name=la,pos=0.7, above] {\color{blue} } (d1)
				(c2) edge node[name=la,pos=0.7, above] {\color{blue} } (c1)
				(d3) edge node[name=la,pos=0.7, above] {\color{blue} } (d2)
				;
				\node at (7.2,1) {$A$};
				\node at (7.2,-3) {$B$};
				\node at (12.5,-4.8) {$(b)$};	
			\end{scope}	
			
		\end{tikzpicture}
	\end{center}
	\caption{The alternating path $P$. Dashed lines indicate uncoloured edges, and solid
		lines indicate edges with color $i$.}
	\label{figure3}
\end{figure}

Suppose now that we have some MCC-pair $(a, b)$ with respect to $\alpha$, with $a\in A$ and
$b\in B$. By \eqref{eq6}, we have $|N_2(a)|, |N_2(b)| >\frac{m}{2}+\frac{\varepsilon m}{3}$. We choose $a_1a_2$ with color $\alpha$ such that $a_1\in N_1(b)$ and $a_2\in N_2(b)$. Now as $|N_2(a)|, |N_1(a_2)| >\frac{m}{2}+\frac{\varepsilon m}{3}$ by~\eqref{eq6}, we know that $|N_1(a_2)\cap N_2(a)| >\frac{2\varepsilon m}{3}$. We choose $b_2\in N_1(a_2)\cap N_2(a)$ such that $b_2a_2 \not\in E(Q)$.   Let $b_1\in N_1(a)$ such that $b_1b_2$ is colored by $\alpha$. Then $P = ab_1b_2a_2a_1b$ is an alternating path from $a$ to $b$ (See Figure \ref{figure3}(a)). We exchange $P$ by coloring $ab_1, b_2a_2$ and $a_1b$ with color $\alpha$ and uncoloring the edges $a_1a_2$ and $b_1b_2$. After the exchange, the color
$\alpha$ appears on edges incident with $a$ and $b$, the edge $a_1a_2$ is added to $R_A$ and the edge $b_1b_2$ is added to $R_B$.

Consider then an MCC-pair $(a, a^*)$ with respect to $\alpha$ such that $a, a^*\in A$ (the case for an MCC-pair $(b, b^{*})$ with $b, b^*\in B$ can be handled similarly). By \eqref{eq6}, we have $|N_2(a^*)| >\frac{m}{2}+\frac{\varepsilon m}{3}$. We take an edge $b_1^*b_2^*$ colored by $\alpha$ with $b_1^*\in N_1(a^*)$ and $b_2^{*}\in N_2(a^*)$.
Then again, by \eqref{eq6}, we have $|N_2(a)|, |N_2(b_2^*)| >\frac{m}{2}+\frac{\varepsilon m}{3}$. Therefore, as each vertex
$c\in N_2(b_2^*)$ satisfies $|N_1(c)| >\frac{m}{2}+\frac{\varepsilon m}{3}$, we have $|N_1(c)\cap N_2(a)| >\frac{2\varepsilon m}{3}$. We take $a_2a_2^*$ colored by $\alpha$ with $a_2^*\in N_1(b_2^*)$ such that $a_2^*b_2^*\not\in E(Q)$ and $a_2\in N_2(b_2^*)$.
Then we let $b_2\in N_1(a_2)\cap N_2(a)$ such that $a_2b_2 \not\in E(Q)$, and let $b_1$
be the vertex in $N_1(a)$ such that $b_1b_2$ is colored by $\alpha$. Now we get the alternating path $P = ab_1b_2a_2a_2^*b_2^*b_1^*a^*$ (See Figure~\ref{figure3}(b)). We exchange $P$ by coloring $ab_1, b_2a_2, a_2^*b_2^*$
and $b_1^*a^*$ with color $\alpha$ and uncoloring the edges $b_1b_2$, $b_1^*b_2^*$ and $a_2a_2^*$. After the exchange, the color $\alpha$ appears on edges incident with $a$ and $a^*$, the edges $b_1b_2$ and $b_1^*b_2^*$ are added to $R_B$ and the edge $a_2a_2^*$
is added to $R_A$. In this process, we added one edge to $R_A$ and at most two edges
to $R_B$.

We have shown above how to find alternating paths connecting all MCC-pairs, and to
modify $\varphi$ by switching on these. Each time we make such a switch, we increase by one the size of the color class $\alpha$ (and decrease by 2 the number of vertices that are missing color $\alpha$). By repeating this process, we can therefore continue until the color class $\alpha$ is a perfect matching of $G^{*}$. During this whole process, note that we did not alter any colors for edges from $E(Q)$. Thus the resulting edge coloring is still an extension of $\varphi_0$.  For simplicity, the resulting coloring is still denoted by $\varphi$.

\begin{center}
    {\bf Step 4: Color $R_A$ and $R_B$ and extend  the new color classes}
\end{center}
 Let  $\ell=\lceil m^{5/6}\rceil+1$.
In this step, we extend $\varphi$ by introducing $\ell$ new colors from the set $[1,d+2]\setminus (C_2 \cup C_0)$. By (C2), $\Delta(R_A), \Delta(R_B)<m^{5/6}$. By Vizing's Theorem, $R_A\cup R_B$ admits an edge coloring using $\ell$ colors. We let   $C_3$ be the set
of $\ell$ colors from $[1,d+2]\setminus (C_2 \cup C_0)$ used on edges of $R_A\cup R_B$.  By Theorem~\ref{lem:equa-edge-coloring}, we may assume that all these new color classes differ in size by at most one. Since $|E(R_A)|=|E(R_B)|$ by (C1),
by possibly renaming some color classes we may assume that each color
appears on the same number of edges in $R_A$ as it does in $R_B$.

By (C1), $|E(R_A)|, |E(R_B)|< 4m^{5/3}$. Since  $\ell>m^{5/6}$, each color $\alpha\in C_3$ appears on fewer than $4 m^{5/6} +1$
edges in each of $R_A$ and $R_B$. We will now color some of the edges of $H-E(Q)$ with the $\ell$ new colors so that each color class induces a perfect matching. We
 perform the following procedure for each of the $\ell$ colors in turn.

 Let $\alpha\in C_3$. We define $A_{\alpha}, B_{\alpha}$ as the sets of vertices in $A, B$, respectively, that are incident with edges colored $\alpha$.
Then, from our discussion in the previous paragraph, $|A_{\alpha}|,|B_{\alpha}| < 2 (4 m^{5/6} +1)$.   By our choices we in fact have $|A_{\alpha}|=|B_{\alpha}|$;
let $H_{\alpha}$ be the subgraph of $H-E(Q)$ obtained by
deleting the vertex sets $A_{\alpha} \cup B_{\alpha}$ and removing all colored edges. We will show that $H_{\alpha}$ has a perfect matching and we will color these matching edges with $\alpha$.

By (C3), each vertex in $V(G^{*})$ is incident with fewer than
$2m^{5/6}+\ell \le 4 m^{5/6}$
edges of $H$ that are colored. Also each vertex in $A$ has fewer than $ 2(4 m^{5/6} +1)$ edges to $B_{\alpha}$ and each vertex in $B$ has fewer than $ 2(4 m^{5/6} +1)$ edges to $A_{\alpha}$. So each vertex  $v\in V(H_{\alpha})$ is adjacent in $H_{\alpha}$ to  more than
$$
\tfrac{1}{2}\left(d^{*}-m^{2/3}\right)-4m^{5/6}-2(4 m^{5/6} +1)>\tfrac{(1+\varepsilon)m}{2}-13m^{5/6}>\frac{m}{2}
$$
vertices.
By Lemma~\ref{lem:PM}, $H_{\alpha}$
has a perfect matching $M$.
Color the edges of $M$ with the color $\alpha$. Then the color class $V_{\alpha}$ is a perfect matching of $G^{*}$.

Repeating the process above for each other color from $C_3$, we can extend each of the color classes   $\beta \in C_3$
into a perfect matching of $G^{*}$.

\begin{center}
{\bf Step 5: Color the uncolored edges}
\end{center}

Define $R$ to be the subgraph of $G^{*}$ induced on the set of uncolored edges of $G^{*}$. Then $R$ is bipartite as $G^{*}_A$ and $G^{*}_B$ are both colored in the previous steps. As each of the $k$ colors in $C_2$ and each of the $\ell$ colors in $C_3$ appears at every vertex of $G^{*}$, we have  $\Delta(R)=d^{*}-k-\ell$. By Theorem~\ref{konig},
  $R$ has an edge coloring using exactly $d^{*}-k-\ell$  colors from $[1,d+2]\setminus(C_0\cup C_2\cup C_3)$.
Denote still by $\varphi$ the resulting edge coloring of $G^{*}$.

\begin{center}
{\bf Step 6: Adjust the edge coloring $\varphi$ to be the required edge coloring}
\end{center}

For Theorem~\ref{thm}(i), as $G^{*}$ is $(d+2)$-regular and
$\varphi$  used exactly $d+2$ colors, we have
 $$s_\varphi(u)=s_\varphi(v)= [1,d+2]$$ for any distinct vertices $u,v\in V(G^{*})$. Furthermore, as $\varphi_0$ is a vertex-distinguishing edge coloring of $Q$, the edge coloring $\varphi$ of $G^{*}$ restricted on $G$ is a vd-$(d+2)$-edge coloring of $G$.

 In the following,
we modify $\varphi$ for the proof of Theorem~\ref{thm}(ii).
As $G:=G^{*}$ is $d$-regular and
$\varphi$  used exactly $d$ colors, we have
 $$\omega_\varphi(u)=\omega_\varphi(v)= \sum_{i\in [1,d+2]\setminus C_0} i$$ for any distinct vertices $u,v\in V(G)$.
 Let $s=\sum_{i\in [1,d+2]\setminus C_0} i$.

We let $\varphi_0'$ be a new edge-coloring of
$Q$ defined as follows.

If $r\in [0,1]$, for each $i\in[1,q]$, let
\begin{eqnarray*}
    \varphi_0'(v_iu_i) &=& \left\{\begin{array}{cc}
                                  2q+2 & \mbox{if}\ q\ \mbox{is\ odd}; \\
                                   2q & \mbox{if}\ q\ \mbox{is\ even},
                                 \end{array}\right.\\
   \varphi_0'(v_ix_i)& =& 2 ,   \\
 \varphi_0'(v_qy_q) &=& \varphi_0(v_qy_q)=1 \quad \text{if $r=1$}.
\end{eqnarray*}

 If $r=2$ and $q$ is odd, for each $i\in[1,q-1]$, let
 \begin{eqnarray*}
    \varphi_0'(v_iu_i) &=& 2q+2, \, \varphi_0'(v_ix_i) = 2 ,   \\
 \varphi_0'(v_qu_q)&=&2q-1,\, \varphi_0'(v_qx_q)=2, \, \varphi_0'(v_qy_q)=2q+1,  \,
      \varphi_0'(y_qz_q)=2q+2.
\end{eqnarray*}

 If $r=2$  and $q$ is even, for each $i\in[1,q-1]$, let
 \begin{eqnarray*}
    \varphi_0'(v_iu_i) &=& 2q+4, \, \varphi_0'(v_ix_i) = 2 ,   \\
 \varphi_0'(v_qu_q)&=&2q-1,\, \varphi_0'(v_qx_q)=2, \, \varphi_0'(v_qy_q)=2q+1,  \,
       \varphi_0'(y_qz_q)=2q+4.
\end{eqnarray*}

Denote by $\psi$ the resulting edge-coloring of $G$ after $\varphi_0$ is repealed by $\varphi_0'$.
It is clear that $\psi$ used exactly $d+2$ colors. We claim that $\psi$
is an edge coloring of $G$.  By the construction, $\varphi_0'$ is an edge coloring of $Q$.
As colors in $C_0$ are not used by $\varphi$, there is no other edge in $E(G)\setminus E(Q)$
that is colored by any colors of $C_0$. When $r=2$, as $\varphi(v_qx_q)=\varphi(y_qz_q)=2q+1$, and $\varphi_0'(v_qx_q)=2q+4$
and  $\varphi_0'(y_qz_q)=2$, it follows that $v_qy_q$ is the only edge incident with $v_q$ and $y_q$ that is colored by $2q+1$
under $\psi$. Thus $\psi$ is an edge-$(d+2)$-coloring of $G$.
In the following, we show that $\psi$ is sum-distinguishing.

We consider first that $q$ is odd. By the definitions of $\varphi_0$ and $\varphi'_0$, for each $i\in[1,q]$ when $r\in[0,1]$
and for each $i\in[1,q-1]$ when $r=2$, we have
\begin{eqnarray*}
    \omega_\psi(v_i)&=& s-\omega_{\varphi_0}(v_i)+\omega_{\varphi'_0}(v_i) \\
     &=& s-(2i-1+2i+1)+(2q+2+2) \\
     &=&s+2q+4-4i, \\
     \omega_\psi(u_i)&=& s-\omega_{\varphi_0}(u_i)+\omega_{\varphi'_0}(u_i) \\
     &=& s-(2i-1)+2q+2 \\
     &=& s+2q+3-2i, \\
      \omega_\psi(x_i)&=& s-\omega_{\varphi_0}(x_i)+\omega_{\varphi'_0}(x_i) \\
     &=& s-(2i+1)+2 \\
     &=& s+1-2i.
 \end{eqnarray*}
 When $r=1$, we additionally have $\omega_\psi(y_q)=s$;
 and when $r=2$, we additionally have
\begin{eqnarray*}
     \omega_\psi(v_q) &=& s-(2q+1+2q-2)+(2+2q+1)\\
     &=&s+4-2q, \\
     \omega_\psi(u_q) &=& s,\\
     \omega_\psi(x_q) &=& s -(2q+1)+2\\
     &=&s+1-2q,\\
     \omega_\psi(y_q) &=& s -(2q-2+2q+1)+(2q+1+2q+2)\\
     &=&s+4,\\
     \omega_\psi(z_q) &=& s -(2q+1)+2q+2\\
     &=&s+1.
\end{eqnarray*}

Note that  $\omega_\psi(v_i), \omega_\psi(u_i)$ and $\omega_\psi(x_i)$
are all decreasing in $i$. Thus all $\omega_\psi(v_i)$s are distinct, so are $\omega_\psi(u_i)$s and $\omega_\psi(x_i)$s.
As $\omega_\psi(v_i)$ is the sum of $s$ and an even number while each of $\omega_\psi(u_i)$ and $\omega_\psi(x_i)$
is the sum of $s$ and an odd number, we have $\omega_\psi(v_i) \ne \omega_\psi(u_j)$ and $\omega_\psi(v_i) \ne \omega_\psi(x_j)$
for any two distinct $i,j$. Also,   we have $ \omega_\psi(x_j)<s<\omega_\psi(u_i)$ for any distinct $i,j$.
Lastly, as $q$ is odd, we have $\omega_\psi(v_i) \ne s$.
Thus we have $\omega_\psi(u) \ne \omega_\psi(v)$  if distinct $u,v \in \{v_i, u_i, x_i \mid i\in [1,q]\}$ (when $r=0$),
$u,v \in \{v_i, u_i, x_i \mid i\in [1,q]\}\cup \{y_q\}$ (when $r=1$), or  $u,v \in \{v_i, u_i, x_i \mid i\in [1,q-1]\}\cup \{u_q\}$ (when $r=2$).

When $r=2$, clearly, we have $\omega_\psi(u) \ne \omega_\psi(v)$  for any distinct $u,v\in \{v_q, u_q, x_q, y_q, z_q\}$.
 Further, $\omega_\psi(x_q)<\omega_\psi(v_q)<\omega_\psi(u) $ for any $u\in V(G)\setminus \{v_q, x_q\}$;
  $\omega_\psi(y_q)=s+4 \ne \omega_\psi(u_i)$ and $\omega_\psi(y_q)=s+4 \ne \omega_\psi(x_i)$, and
  $\omega_\psi(y_q)=s+4 \ne \omega_\psi(v_i)$ as $q$ is odd.
  Lastly, we have $\omega_\psi(z_q) \ne \omega_\psi(v_i)$
  and  $\omega_\psi(x_i) <\omega_\psi(z_q) <\omega_\psi(u_i)$ for any $i\in[1,q-1]$.

  By the arguments above, we know that $\psi$ is sum-distinguishing when $q$ is odd.

We consider  now that $q$ is even and distinguish between $r\in[0,1]$ and $r=2$.

{\bf \noindent Case 1: $r\in[0,1]$}.

\medskip

  By the definitions of $\varphi_0$ and $\varphi'_0$, for each $i\in[1,q]$,  we have
\begin{eqnarray*}
    \omega_\psi(v_i)
     &=& s-(2i-1+2i+1)+(2q+2) \\
     &=&s+2q+2-4i, \\
     \omega_\psi(u_i)
     &=& s-(2i-1)+2q \\
     &=& s+2q+1-2i, \\
      \omega_\psi(x_i)
     &=& s-(2i+1)+2 \\
     &=& s+1-2i.
 \end{eqnarray*}
 When $r=1$, we additionally have $\omega_\psi(y_q)=s$.

By the same argument as in the case when $q$ is odd, we  conclude that $\psi$
is sum-distinguishing.

 {\bf \noindent Case 2: $r=2$}.

 By the definitions of $\varphi_0$ and $\varphi'_0$, for each $i\in[1,q-1]$,  we have
\begin{eqnarray*}
    \omega_\psi(v_i)
     &=& s-(2i-1+2i+1)+(2q+4+2) \\
     &=&s+2q+6-4i, \\
     \omega_\psi(u_i)
     &=& s-(2i-1)+2q+4 \\
     &=& s+2q+5-2i, \\
      \omega_\psi(x_i)
     &=& s-(2i+1)+2 \\
     &=& s+1-2i.
 \end{eqnarray*}
 In addition, we have
\begin{eqnarray*}
     \omega_\psi(v_q) &=& s-(2q+1+2q)+(2+2q+1)\\
     &=&s+2-2q, \\
     \omega_\psi(u_q) &=& s,\\
     \omega_\psi(x_q) &=& s -(2q+1)+2\\
     &=&s+1-2q,\\
     \omega_\psi(y_q) &=& s -(2q+2q+1)+(2q+1+2q+4)\\
     &=&s+4,\\
     \omega_\psi(z_q) &=& s -(2q+1)+2q+4\\
     &=&s+3.
\end{eqnarray*}

By the same argument as in the case when $q$ is odd, we  conclude that $\psi$
is sum-distinguishing.

\bibliographystyle{abbrv}
\bibliography{sde}

\end{document}